\newcommand\diag[4]{%
  \multicolumn{1}{p{#2}|}{\hskip-\tabcolsep
  $\vcenter{\begin{tikzpicture}[baseline=0,anchor=south west,inner sep=#1]
  \path[use as bounding box] (0,0) rectangle (#2+2\tabcolsep,\baselineskip);
  \node[minimum width={#2+2\tabcolsep-\pgflinewidth},
        minimum  height=\baselineskip+\extrarowheight-\pgflinewidth] (box) {};
  \draw[line cap=round] (box.north west) -- (box.south east);
  \node[anchor=south west] at (box.south west) {#3};
  \node[anchor=north east] at (box.north east) {#4};
 \end{tikzpicture}}$\hskip-\tabcolsep}%
}
\patchcmd{\@maketitle}{\LARGE \@title}{\LARGE\bfseries\@title}{}{}
\renewcommand{\@seccntformat}[1]{\csname the#1\endcsname.\quad}
\definecolor{NiceBlue}{rgb}{0.2,0.2,0.75}
\newcommand{\struc}[1]{{\color{NiceBlue} #1}}
\definecolor{darkblue}{rgb}{0,0,.5}
\def\th@plain{%
	\thm@notefont{}
	\itshape 
}
\def\th@definition{%
	\thm@notefont{}
	\normalfont 
}
\renewenvironment{proof}[1][\proofname]{\par
	\normalfont
	\topsep0\p@\@plus3\p@ \trivlist
	\item[\hskip\labelsep\itshape
	#1\@addpunct{.}]\ignorespaces
}{%
	\qed\endtrivlist
}
\newtheorem{theorem}{Theorem}[section]
\newtheorem{lemma}[theorem]{Lemma}
\newtheorem{corollary}[theorem]{Corollary}
\newtheorem{proposition}[theorem]{Proposition}
\theoremstyle{definition}
\newtheorem{definition}[theorem]{Definition}
\theoremstyle{definition}
\newtheorem{example}[theorem]{Example}
\theoremstyle{definition}
\newtheorem{remark}[theorem]{Remark}
\newcommand{\Id}{\ensuremath{\operatorname{Id}}}
\newcommand{\Fix}{\ensuremath{\operatorname{Fix}}}
\newcommand{\Ker}{\ensuremath{\operatorname{Ker}}}
\newcommand{\Int}{\ensuremath{\operatorname{int}}}
\newcommand{\bdry}{\ensuremath{\operatorname{bdry}}}
\newcommand{\cone}{\ensuremath{\operatorname{cone}}}
\newcommand{\dist}{\ensuremath{\operatorname{dist}}}
\newcommand{\N}{\mathbb{N}}
\newcommand{\R}{\mathbb{R}}
\newcommand{\with}{\ : \ }
\begin{document}

\title{Douglas--Rachford is the best projection method}

\author{Minh N. Dao\thanks{School of Sciences, RMIT University, Melbourne, VIC 3000, Australia.}, Mareike Dressler\thanks{School of Mathematics and Statistics, University of New South Wales, Sydney, NSW 2052, Australia.}, Hongzhi Liao$^\dagger$, and Vera Roshchina$^\dagger$}

\maketitle

\begin{abstract}
We prove that the Douglas--Rachford method applied to two closed convex cones in the Euclidean plane converges in finitely many steps if and only if the set of fixed points of the Douglas--Rachford operator is nontrivial. We analyze this special case using circle dynamics. 

We also construct explicit examples for a broad family of projection methods for which the set of fixed points of the relevant projection method operator is nontrivial, but the convergence is not finite. This three-parametric family is well known in the projection method literature and includes both the Douglas--Rachford method and the classic method of alternating projections. 

Even though our setting is fairly elementary, this work contributes in a new way to the body of theoretical research justifying the superior performance of the Douglas--Rachford method compared to other techniques. Moreover, our result leads to a neat sufficient condition for finite convergence of the Douglas--Rachford method in the locally polyhedral case on the plane, unifying and expanding several special cases available in the literature.  
\end{abstract}

{\small
\noindent{\bfseries AMS Subject Classifications:}
{Primary: 
90C25, 
65K10; 
Secondary: 
47H09, 
65K05. 
}

\noindent{\bfseries Keywords:}
Alternating projections,
convex cone,
Douglas--Rachford method,
finite convergence,
local polyhedrality,
projection methods.
}

\section{Introduction}

Projection methods, such as the method of alternating projections and the Douglas--Rachford method, primarily serve the purpose of finding a point in the intersection of two (closed convex) sets using an interative process built by composing the identity mapping, projection and reflection operators and their convex combinations. For instance, the method of alternating projections \cite{Neu50} is an iterative application of the composition of projections onto two (different) sets, and each step of the Douglas--Rachford method \cite{DR56,LM79} is the composition of two reflections averaged with the identity, applied to the preceding iterate. The aforementioned methods belong to a broader family of projection methods defined in \cite{DP18}. Even though there exist other popular projection methods that do not belong to this family, such as Dykstra's method \cite[Section~30.2]{BC17} and the method of circumcentred reflections \cite{BCS18}, for our purposes it is reasonable to limit the scope to a specific family of methods.

Our goal is to show that the Douglas--Rachford method is ``the best'' method in this family since it converges finitely under very mild assumptions, moreover, the number of iterations needed to reach the set of fixed points has a uniform bound that does not depend on the starting point (see Theorem~\ref{thm:main-result}). This finite convergence property is not true for all other members of this family: we demonstrate this by providing examples for every other case (see Theorem~\ref{thm:badmethods}). We restrict our setting to the case of two closed convex cones in the two-dimensional space. This setting is sufficient to showcase the differences in the behavior of various projection methods, and allows for a neat proof of the finite convergence of the Douglas--Rachford method using circle dynamics.

The finite convergence of the Douglas--Rachford method has been investigated in \cite{BDNP16} for the case involving an affine subspace and a locally polyhedral set, as well as for a hyperplane and an epigraph. It is then proved in \cite{ABT16} for the scenario of a finite set and a halfspace. Subsequently, some more finite-convergence results for the Douglas--Rachford method are provided in \cite{BD17}. However, most of the existing sufficient conditions for finite convergence require the presence of Slater's condition. 
Only one noteworthy exception is when one set is a line or a halfplane, and the other is a locally polyhedral set in the Euclidean plane, where finite convergence can still be established without Slater's condition. The key convergence result of this paper allows us to obtain a general condition for the finite convergence of the Douglas--Rachford method for locally polyhedral sets in the plane subsumes the known results; note that in this case we may not be able to obtain a uniform bound on the number of iterations necessary to reach the set of fixed points.
It is also worth mentioning that finite convergence of the method of alternating projections has recently been studied in \cite{BBS21,BLM21}, but only for two non-intersecting sets.

\medskip
The paper is organized as follows. In Section~\ref{sec:mainresults} we explain our setting, state and briefly discuss the main results (Theorems~\ref{thm:main-result} and~\ref{thm:badmethods}). While Theorem~\ref{thm:main-result} characterizes the finite convergence of the Douglas--Rachford method for the case of two cones in the plane, Corollary~\ref{cor:cones} provides a more convenient condition for the finite convergence, requiring the intersection of two sets to have at least two points in their intersection. We state and prove a general finite convergence result for locally polyhedral sets in the plane in Corollary~\ref{cor:polys}. In Section~\ref{sec:finite} we build the auxiliary theory needed for the proof of Theorem~\ref{thm:main-result}, demonstrating that Douglas--Rachford converges finitely for the case of two convex cones under natural conditions. Section~\ref{sec:counterexamples} is dedicated to the detailed list of examples needed to prove Theorem~\ref{thm:badmethods}, that shows that Douglas--Rachford is the only method in the broad family of projection methods that enjoys the finite-convergence properties given by Theorem~\ref{thm:main-result}.

\medskip
We work in the real finite-dimensional space, predominantly limiting ourselves to $\R^2$. Whenever our proofs work in a more general setting than $\R^2$, we state the results in suitable generality. In consistency with the literature on projection methods that is often set in Hilbert spaces, we use \struc{$\langle x,y\rangle $} to denote the standard inner product on $\R^d$ and $\struc{\|x\|} = \sqrt{\langle x,x\rangle}$ for the norm induced by the inner product. For any \(x,y \in \R^d\), the line segment connecting $x$ and $y$ is given by
\[
\struc{[x,y]}\coloneqq \{\lambda x+(1-\lambda)y \with \lambda \in [0,1]\}.
\]
We say that $r\subseteq \R^d$ is a \struc{\emph{ray}} if there exists some $x\in \R^d$ such that 
\[
\struc{r} = \{\lambda x \with \lambda\in (0, +\infty)\}.
\]
A set $S\subseteq \R^d$ is called a \struc{\emph{cone}} if for all $x\in S$ and all $\lambda\in (0, +\infty)$, we have $\lambda x \in S$. Any cone is a union of rays. 
For $S\subseteq \R^d$, its \struc{\emph{polar cone}} $S^{\circleddash}$ is defined by  
\[
\struc{S^\circleddash} \coloneqq \{y\in \R^d \with \text{for all } x\in S,\quad \langle x,y\rangle \leq 0\}
\]
and its \struc{\emph{orthogonal complement}} $S^{\perp}$ is given by
\[
\struc{S^\perp} \coloneqq \{y\in \R^d \with \text{for all } x\in S,\quad \langle x,y\rangle = 0\}.
\]
We denote the \struc{\emph{interior}} of $S$ by \struc{$\Int S$} and its \struc{\emph{boundary}} by \struc{$\bdry S$}.

\section{Generalized Projection Methods and the  Main Results}\label{sec:mainresults}

Recall that for a closed convex set $C\subseteq \R^d$, by \cite[Theorem 3.16]{BC17}, the projection $\struc{P_C(x)}$ of $x\in \R^d$ onto $C$ is the unique point $p = P_C(x)\in C$ such that, for all $u\in C$, 
\[
\|x-p\| \leq \|x-u\|.
\]

Given two closed convex sets $A,B\subseteq \R^d$ and an initial point $x_0\in \R^d$, the intersection of $A$ and $B$ can be approached by alternatively projecting the current iterate onto each one of these sets. We may consider the sequence of two projections as one iteration and treat this method as fixed point iterations of the composition $\struc{T_{AP}(x)} \coloneqq P_B \circ P_A (x)$, called \struc{\emph{the method of alternating projections}}. This method, also called \emph{von Neumann method} after its inventor, can have very slow convergence with rate depending on the relative geometry of the two sets (exemplified by the case of two linear subspaces where the rate of convergence is linear and depends on the Friedrichs angle between the subspaces; see \cite{KW88,BGM10,BGM11}).

Amongst many ways to improve the convergence rate of the method of alternating projections, one can `overproject' by pushing the projection beyond the boundary of the closed convex set. An extreme version of such an overprojection is a reflection in the hyperplane perpendicular to the direction of the projection. 

For a closed convex set $C\subseteq \R^d$, we define the \struc{\emph{reflection operator}} $R_C(x)$ as 
\[
\struc{R_C(x)} \coloneqq 2 P_C(x)-x,
\]
that is, $R_C = 2 P_C-\mathrm{Id} $, with \struc{$\mathrm{Id}$} being the \struc{\emph{identity operator}}.

The \struc{\emph{Douglas--Rachford operator}} on an ordered pair of closed convex sets $A,B\subseteq \R^d$ is the composition of two reflections, averaged with the identity operator,  
\[
\struc{T_{DR}} \coloneqq \frac{1}{2}({\rm Id} +R_B\circ R_A).
\]

Analogous to overprojecting, it can be useful to underproject, and to compose the fixed point operator from various combinations of such mappings. It is reasonable to build such operators preserving the benign properties that help control and track the convergence. We therefore use the following definition of a generalized projection method from \cite{DP18,DP19} that allows us to work in a fairly general setting, while ensuring converegence of our method.

\begin{definition}[Generalized projection operator]\label{def:projop}
Let \(A, B \subseteq \R^d\) be closed convex sets, and $\mu,\lambda \in (0,2], \kappa \in (0,+\infty)$. Denote 
\[
\struc{P_A^{\lambda}} \coloneqq (1-\lambda)\Id + \lambda P_A \quad{\rm and}\quad \struc{P_B^{\mu}} \coloneqq (1-\mu)\Id + \mu P_B.
\]
We define the \struc{\emph{generalized projection operator}} as
\begin{equation*}
    \struc{T_{\lambda,\mu}^{\kappa}} \coloneqq (1-\kappa)\Id + \kappa P_B^{\mu}\circ P_A^{\lambda}.
\end{equation*}
\end{definition}

Note that $\mu=\lambda = \kappa = 1$ yields the alternating projection operator, and $\mu = \lambda = 2$ and $\kappa = 1/2$ generates the Douglas--Rachford operator. 

Starting from $x_0\in \R^d$, the generalized projection method applies the operator $T =T_{\lambda,\mu}^{\kappa}$ to the current iterate, generating a sequence $(x_n)_{n\in \mathbb{N}}$, with 
\[
x_{n+1} = T (x_{n}) = T^n(x_0).
\]
This method is said to \struc{\emph{converge finitely}} if for every $x_0\in \R^d$, the sequence $(T^n(x_0))_{n\in \mathbb{N}}$ converges finitely, i.e., there is an $n \in \N$ such that $T^n(x_0)\in \Fix T$, where $\Fix T$ denotes the  \struc{\emph{fixed point set}} of $T$, given by
\[\struc{\Fix T} \coloneqq \{x\in \R^d \with T(x) = x\}.\]
If we restrict the domain and codomain of $T$ to a subset $S$, we emphasize this in the set of fixed points as $\struc{\Fix_S T}$.

For the case when the two sets $A$ and $B$ are closed convex cones, we will borrow the linear algebra notation and let
\[
\struc{\Ker T} \coloneqq \{x\in \R^d \with T(x) = 0_d\},
\]
where \struc{$0_d$} is the origin of $\R^d$.

In the case when $A, B \subseteq \R^d$ are closed convex sets with $A\cap B\neq \varnothing$, it is known, e.g., from \cite[Corollary~3.9 and Theorem~3.13]{BCL04} that
\begin{equation}\label{eq:FixDR}
\Fix T_{DR} =A\cap B +(A-B)^{\circleddash} \text{~~and~~} P_A(\Fix T_{DR}) =A\cap B    
\end{equation}
and for every sequence $(x_n)_{n\in \mathbb{N}}$ generated by the Douglas--Rachford method,
\begin{equation}\label{eq:seqDR}
x_n\to x\in \Fix T_{DR} \text{~~and~~} P_A(x_n)\to P_A(x) \in A\cap B \text{~as~} n\to +\infty.    
\end{equation}
In the case of two linear subspaces, the Douglas--Rachford method also exhibits linear convergence and the rate of convergence is the cosine of the Friedrichs angle between the subspaces; see~\cite{HLN14,BBNPW14}. For the linear convergence in a more general setting, we refer to~\cite{Phan14}.

Our first result (Theorem~\ref{thm:main-result}) shows that the Douglas--Rachford method converges finitely on a pair of closed convex cones with the only exception being the case when the set of fixed points of the Douglas--Rachford operator is trivial. In particular, this means that these two sets intersect in a single point, since the set of fixed points of the Douglas--Rachford operator must contain the intersection of these sets. For an illustration of the result, see Figure~\ref{fig:main_thm}.

\begin{figure}[ht]
    \centering
    \includegraphics[width=0.5\textwidth]{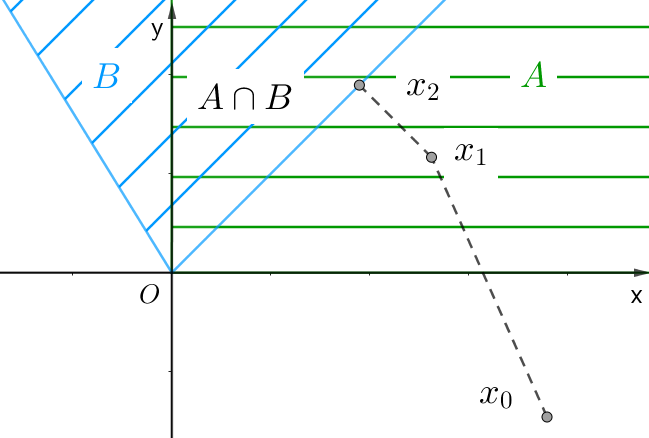}
    \hspace{0.03\textwidth}
    \includegraphics[width=0.45\textwidth]{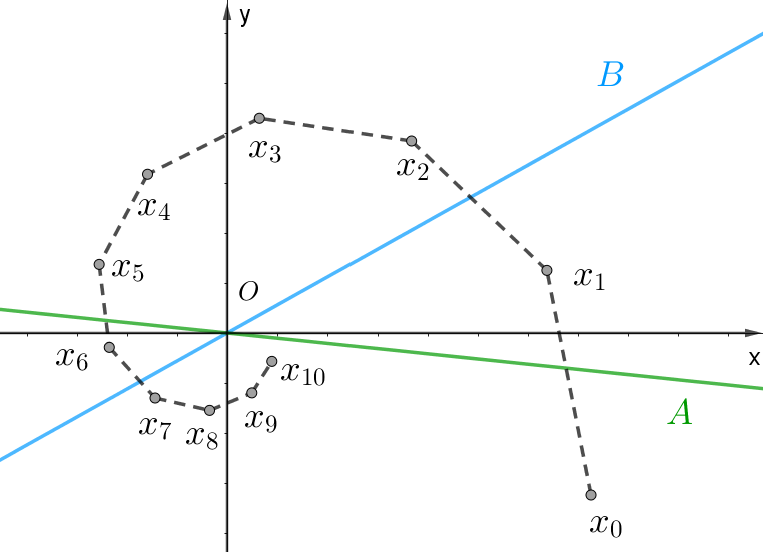}
    \caption{Left: Illustration of Theorem~\ref{thm:main-result}\ref{thm:main-result_not0}. Starting from an initial guess $x_0 \notin A \cup B$, the Douglas--Rachford method finds a point in $A \cap B$ in two steps. \\
    Right: Illustration of Theorem~\ref{thm:main-result}\ref{thm:main-result_0}. Here, $A$ and $B$ are two straight lines and $\Fix T_{DR} = A \cap B = \{0\}$. The sequence $(x_n)_{n\in \mathbb{N}}$ generated by $T_{DR}$ will converge to 0 but not finitely.}
    \label{fig:main_thm}
\end{figure}

\begin{theorem}\label{thm:main-result}
Let \(A, B \subseteq \mathbb{R}^2\) be two closed convex cones. 
\begin{enumerate}
\item\label{thm:main-result_not0}
If $\Fix T_{DR}\neq \{0\}$, then there exists an $n\in \N$ such that $T_{DR}^n(\R^2) \subseteq \Fix T_{DR}$. That is, starting from any point $x_0\in \R^2$, the Douglas--Rachford method applied to $(A, B)$ converges in at most $n$ iterations.
\item\label{thm:main-result_0}
If $\Fix T_{DR} = \{0\}$, then for any $n\in \N$, $T_{DR}^n(\R^2\setminus \Ker T_{DR})\cap \Fix T_{DR} = \varnothing$. That is, the Douglas--Rachford method applied to $(A, B)$ converges finitely if and only if the starting point $x_0$ is in the kernel of $T_{DR}$. 
\end{enumerate}
\end{theorem}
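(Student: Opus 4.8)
The plan is to exploit positive homogeneity to reduce the whole problem to a one-dimensional dynamical system on the circle of directions. Since $A$ and $B$ are cones, $P_A$ and $P_B$ (and hence $R_A$, $R_B$, $R_B\circ R_A$ and $T_{DR}$) are positively homogeneous of degree one, so $T_{DR}$ is completely determined by its restriction to the unit circle $S^1$. Both $\Fix T_{DR}$ and $\Ker T_{DR}$ are closed convex cones, so each corresponds to a closed arc of directions (possibly empty, a single point, or all of $S^1$). For $u\in S^1$ with $T_{DR}(u)\neq 0$ I would set $\hat T(u) \coloneqq T_{DR}(u)/\|T_{DR}(u)\|$, obtaining an induced circle map; where $T_{DR}(u)=0$ the direction is absorbed into the kernel. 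Because membership in a cone and the property ``$T_{DR}^n(\cdot)\in\Fix T_{DR}$'' are invariant under positive scaling, it suffices to track directions: $T_{DR}^n(\R^2)\subseteq\Fix T_{DR}$ holds for some $n$ if and only if every $u\in S^1$ reaches $\Fix T_{DR}$ within $n$ steps. The uniform bound in part \ref{thm:main-result_not0} then comes for free from the compactness of $S^1$ together with the fact that $P_A,P_B$ have only finitely many linear pieces (delimited by the boundary rays of $A,B$ and of $A^\circleddash,B^\circleddash$), so that $T_{DR}$ is piecewise linear with finitely many angular sectors of linearity and $\hat T$ is a piecewise-projective circle map.

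For part \ref{thm:main-result_not0} I would let $J\subseteq S^1$ be the nonempty arc of directions of the cone $\Fix T_{DR}$, on which $T_{DR}=\Id$. The decisive elementary observation is that if the direction of an iterate ever lies in $J$, then, the fixed set being a cone, that iterate already lies in $\Fix T_{DR}$ and the method has terminated. Since the global convergence \eqref{eq:seqDR} guarantees $x_n\to\bar x\in\Fix T_{DR}$ for every start, whenever $\bar x\neq 0$ the direction $\theta_n$ of $x_n$ converges to that of $\bar x$; if the limiting direction lies in the interior of $J$ we are immediately done, because $\theta_n\in J$ for all large $n$. Thus the entire difficulty is concentrated at the boundary rays $\bdry J$ (and at orbits with $\bar x = 0$): I must rule out the possibility that $\theta_n$ approaches a boundary direction $\theta^{\ast}\in\bdry J$ from outside $J$ without ever entering it. To this end I would show that $\hat T$ is monotone and pushes every direction monotonically toward $J$, so that in finitely many steps the orbit reaches the angular sector immediately adjacent to $J$, on which $T_{DR}$ acts as a single linear map $M$.

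The main obstacle is precisely the analysis of this adjacent linear map $M$. The boundary direction $u_{\theta^{\ast}}$ is fixed by $T_{DR}$, hence an eigenvector of $M$ with eigenvalue $1$; whether the method converges finitely or only asymptotically is decided by how $M$ moves directions near $\theta^{\ast}$. A ``crossing'' map carries the adjacent sector across $\bdry J$ into $J$ in one step (finite convergence), whereas a map that merely contracts directions toward $\theta^{\ast}$ from outside would produce an infinite, asymptotic orbit. I would compute $M$ explicitly from $R_B\circ R_A$ on the relevant sector and verify that, for the Douglas--Rachford operator, $M$ is of the crossing type; this is exactly where the special structure of $T_{DR}$ (as opposed to a general member of the family $T_{\lambda,\mu}^{\kappa}$) is used, and it is what ultimately makes Douglas--Rachford ``the best.'' Combining the crossing property with monotonicity and the finiteness of the sectors yields a single bound $n$ valid for all $u\in S^1$, completing part \ref{thm:main-result_not0}.

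For part \ref{thm:main-result_0} I would first argue that $\Fix T_{DR}=\{0\}$ is very restrictive: by \eqref{eq:FixDR} it forces both $A\cap B=\{0\}$ and $(A-B)^\circleddash=\{0\}$, and a short case analysis of convex cones in the plane shows this can only happen when $A$ and $B$ are two distinct lines through the origin (together with degenerate configurations in which $T_{DR}\equiv 0$). In either case $R_A$ and $R_B$ are genuine linear reflections, so $T_{DR}=\tfrac12(\Id+R_B\circ R_A)$ is a \emph{globally linear} map: a rotation composed with a contraction (a spiral), or the zero map. If $T_{DR}$ is the zero map then $\Ker T_{DR}=\R^2$ and the assertion is vacuous; otherwise $T_{DR}$ is invertible with no eigenvalue $1$, so $\Ker T_{DR}=\{0\}$ and $T_{DR}^n$ maps every nonzero vector to a nonzero vector, never reaching $\Fix T_{DR}=\{0\}$. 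Either way $T_{DR}^n(\R^2\setminus\Ker T_{DR})\cap\Fix T_{DR}=\varnothing$ for all $n$, and since $x_0\in\Ker T_{DR}$ trivially yields $x_1=0\in\Fix T_{DR}$, this gives the claimed equivalence ``finite convergence if and only if $x_0\in\Ker T_{DR}$.'' In the circle-dynamics language this is the dichotomy that in case \ref{thm:main-result_0} the induced map $\hat T$ has no fixed direction (nonzero rotation number), so directions circulate forever while the radius contracts geometrically to $0$.
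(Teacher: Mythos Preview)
Your overall circle-dynamics framework is the same as the paper's, but part~\ref{thm:main-result_not0} has a real gap. Your argument leans on the convergence $x_n\to\bar x\in\Fix T_{DR}$ from \eqref{eq:seqDR} to conclude that the direction $\theta_n$ converges to a point of $J$; this breaks down precisely when $\bar x=0$, a case you flag parenthetically but never resolve. The paper avoids this entirely by proving (Proposition~\ref{prop:kerintfix}, via firm nonexpansiveness) that $T_{DR}(\R^2)\cap\Ker T_{DR}=\{0\}$, so once out of the kernel the orbit never returns, and the induced circle map is well defined on a connected arc $I=S^1\setminus K$ for all iterations. You never establish this, so neither your iteration of $\hat T$ nor your ``monotone toward $J$'' claim is justified. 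The paper then classifies the linear pieces of $T_{DR}$ (Proposition~\ref{prop:mappingtypes}: identity, projection onto a line, projection onto $0$, or rotation scaled by $|\cos\theta|$) and deduces that the circle map is arc-length nonexpansive and that the piece adjacent to any fixed direction must be a projection onto that direction; this is exactly your ``crossing'' property, obtained without computing a specific matrix $M$. Finally, the paper treats separately the case when $\Ker T_{DR}$ is a line (Proposition~\ref{prop:Kerline}), where the complement on the circle is disconnected and the circle-dynamics argument does not apply directly; you omit this case.

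Your part~\ref{thm:main-result_0} takes a genuinely different route and is essentially correct, but the step ``$\Fix T_{DR}=\{0\}$ forces $A,B$ to be two distinct lines (or $T_{DR}\equiv 0$)'' needs a full case analysis of planar closed convex cones that you only gesture at: one must check that whenever at least one of $A,B$ is a ray, a half-plane, or a pointed cone of angle $<\pi$ and $A\cap B=\{0\}$, the two cones can be separated by a line through the origin, so $(A-B)^{\circleddash}\neq\{0\}$. Once this is in place your linear-algebra conclusion is fine. By contrast, the paper handles~\ref{thm:main-result_0} in two lines as a direct corollary of Proposition~\ref{prop:kerintfix}: with $W=\R^2\setminus\Ker T_{DR}$ one has $T_{DR}(W)\subseteq W$ and $0\notin W$, hence $T_{DR}^n(W)\cap\{0\}=\varnothing$ for all $n$. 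Your approach trades this short nonexpansiveness argument for a longer, but more explicit, structural classification.
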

The proof of Theorem~\ref{thm:main-result} follows from Propositions~\ref{prop:Kerline} and~\ref{prop:dichotomy} given in Section~\ref{sec:finite}. We point out that finite convergence to the set of fixed points does not guarantee that the method actually produces a point in $A\cap B$, since  $\Fix T_{DR}$ may be larger than $A\cap B$. However, it is common in the literature to consider this type of convergence results; see \cite{BD17} for example. 

\begin{remark}
The condition $\Fix T_{DR} =\{0\}$ in Theorem~\ref{thm:main-result}\ref{thm:main-result_0} can be formulated in several equivalent ways. Indeed, if $A, B \subseteq \R^d$ are closed convex sets with nonempty intersection, then by \eqref{eq:FixDR},
\begin{align*}
\Fix T_{DR} =\{0\} &\iff A\cap B +(A-B)^{\circleddash} =\{0\} \\
&\iff A\cap B =\{0\} =(A-B)^{\circleddash} \\
&\iff A\cap B =\{0\} \text{~and~} 0\in \Int(A-B). 
\end{align*}
Now, assume that $A$ and $B$ are closed convex cones. Hence $A-B$ is a cone, and so $0\in \Int(A-B)$ if and only if $A-B =\R^d$. In this case,
\[
\Fix T_{DR} =\{0\} \iff A\cap B =\{0\} \text{~and~} A-B =\R^d.
\]
\end{remark}

\begin{corollary}\label{cor:cones}
Let $A,B\subseteq \R^2$ be two closed convex cones with more than one common point. Then there exists an $n\in \N$ such that for any starting point $x_0\in \R^2$, the Douglas--Rachford method applied to $(A, B)$ converges in at most $n$ iterations.   
\end{corollary}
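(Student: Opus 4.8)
The plan is to derive this corollary directly from Theorem~\ref{thm:main-result}\ref{thm:main-result_not0}. The only thing that needs checking is that the hypothesis $\Fix T_{DR}\neq\{0\}$ of that part is automatically satisfied as soon as $A$ and $B$ share more than one point; the uniform bound $n$ then comes for free, since Theorem~\ref{thm:main-result}\ref{thm:main-result_not0} delivers an $n\in\N$ with $T_{DR}^n(\R^2)\subseteq\Fix T_{DR}$, independent of the starting point.

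First I would record two elementary facts. Since $A$ and $B$ have common points they are nonempty, and a nonempty closed cone contains the origin: if $x$ lies in the cone, then so does $\lambda x$ for every $\lambda>0$, and letting $\lambda\downarrow 0$ together with closedness gives $0$ in the cone. Hence $0\in A\cap B$, so in particular $A\cap B\neq\varnothing$ and the identity \eqref{eq:FixDR} applies. Moreover, because $A\cap B$ has more than one element and already contains $0$, it must contain some $x\neq 0$; as an intersection of two cones it is itself a cone, so it contains the whole ray $\{\lambda x\with\lambda>0\}$ along with $0$, and in particular $A\cap B\neq\{0\}$.

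It then remains to combine these observations. The polar cone $(A-B)^{\circleddash}$ contains $0$, so \eqref{eq:FixDR} gives $\Fix T_{DR}=A\cap B+(A-B)^{\circleddash}\supseteq A\cap B\neq\{0\}$, whence $\Fix T_{DR}\neq\{0\}$. Applying Theorem~\ref{thm:main-result}\ref{thm:main-result_not0} finishes the proof. I do not anticipate any genuine obstacle here: the argument is a short structural deduction, and the only points needing care are verifying $A\cap B\neq\varnothing$ so that \eqref{eq:FixDR} is available, and noting $0\in(A-B)^{\circleddash}$ so that the inclusion $\Fix T_{DR}\supseteq A\cap B$ holds.
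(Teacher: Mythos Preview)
Your argument is correct and follows the same route as the paper: use \eqref{eq:FixDR} (specifically the inclusion $A\cap B\subseteq \Fix T_{DR}$) to upgrade ``more than one common point'' to $\Fix T_{DR}\neq\{0\}$, and then invoke Theorem~\ref{thm:main-result}\ref{thm:main-result_not0}. The paper's version is simply terser, omitting the explicit checks that $0\in A\cap B$ and $0\in (A-B)^{\circleddash}$ that you spell out.
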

\begin{proof}
As $A\cap B$ contains more than one point, we have from \eqref{eq:FixDR} that $\Fix T_{DR}\neq \{0\}$. The conclusion follows from Theorem~\ref{thm:main-result}\ref{thm:main-result_not0}.     
\end{proof}

We can draw the following interesting consequence for two locally polyhedral sets. Recall that a set is \struc{\emph{polyhedral at $x$}} if there exists a neighbourhood of $x$ in which the set coincides with some polyhedral set.

\begin{corollary}\label{cor:polys}
Let $A,B\subseteq \R^2$ be two closed convex sets with more than one common point such that $A$ and $B$ are polyhedral at every point in $\bdry A \cap \bdry B$ (if any). Then, for any starting point $x_0\in \R^2$, the Douglas--Rachford method applied to $(A, B)$ converges in finitely many iterations to a point in $\Fix T_{DR}$.  
\end{corollary}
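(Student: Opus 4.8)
The plan is to reduce the statement to the conical case already settled in Corollary~\ref{cor:cones} by localizing the Douglas--Rachford dynamics around the limit point. By \eqref{eq:seqDR} the sequence $(x_n)_{n\in\N}$ generated from $x_0$ converges to some $x\in\Fix T_{DR}$, and $p\coloneqq P_A(x)\in A\cap B$. Writing the operator in its shadow form $T_{DR}=\Id+P_B\circ R_A-P_A$, the fixed-point equation $T_{DR}(x)=x$ together with $R_B R_A x=x$ yields $P_A(x)=P_B(x)=p$ and $P_B(R_A(x))=p$, where $R_A(x)=2p-x$. Thus along the tail of the sequence $P_A$ is evaluated only near $x$ while $P_B$ is evaluated only near $2p-x$, and in both cases the outputs cluster at $p$, since $P_A(x_n)\to p$ and $R_A(x_n)\to 2p-x$.

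First I would treat the main case $p\in\bdry A\cap\bdry B$. By hypothesis $A$ and $B$ are polyhedral at $p$, so there is a ball $B(p,\rho)$ on which $A$ and $B$ coincide with $p+K_A$ and $p+K_B$, where $K_A,K_B$ are the (polyhedral, hence closed convex) tangent cones of $A,B$ at $p$. The technical core is a localization lemma asserting that $P_A$ agrees with $P_{p+K_A}$ on a neighbourhood of $x$ and $P_B$ agrees with $P_{p+K_B}$ on a neighbourhood of $2p-x$. To anchor this I would first check the base points: since $p=P_A(x)$ gives $x-p\in N_A(p)=K_A^\circleddash$ (with $N_A(p)$ the normal cone of $A$ at $p$), and $P_{p+K_A}(x)=p+P_{K_A}(x-p)$, the polar condition forces $P_{K_A}(x-p)=0$, so $P_{p+K_A}(x)=p=P_A(x)$; the symmetric computation at $2p-x$ gives $P_{p+K_B}(2p-x)=p=P_B(2p-x)$. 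Continuity of the two projections then keeps both of them inside $B(p,\rho)$ for arguments near $x$ (resp. near $2p-x$), and since on $B(p,\rho)$ the sets agree with their local cones, the standard nearest-point/uniqueness argument shows the two projections coincide there. Consequently $T_{DR}$ coincides on some ball $\bar B(x,\varepsilon)$ with the Douglas--Rachford operator $T_{DR}^{\mathrm c}$ of the translated pair $(p+K_A,p+K_B)$.

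Next I would confine the tail and invoke the conical result. Because $T_{DR}$ is nonexpansive and fixes $x$, the distances $\|x_n-x\|$ are nonincreasing, so once some $x_N\in\bar B(x,\varepsilon)$ all later iterates remain there; hence $x_{n+1}=T_{DR}^{\mathrm c}(x_n)$ for $n\ge N$, i.e.\ the tail is a Douglas--Rachford sequence for the cones $(K_A,K_B)$ up to the translation by $p$, and $T_{DR}^{\mathrm c}$ is the translate of the conical operator, so finite convergence is preserved. Since $A\cap B$ contains a point $q\ne p$, convexity gives $[p,q]\subseteq A\cap B$, and intersecting a short initial portion of this segment with $B(p,\rho)$ shows $q-p\in K_A\cap K_B\setminus\{0\}$; thus $K_A\cap K_B\ne\{0\}$ and Corollary~\ref{cor:cones} produces an $m$ with $(T_{DR}^{\mathrm c})^m(x_N)\in\Fix T_{DR}^{\mathrm c}$. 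As $x_{N+m}$ lies in $\bar B(x,\varepsilon)$, coincidence of the operators there upgrades this to $x_{N+m}\in\Fix T_{DR}$, giving finite convergence.

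Finally I would dispatch the remaining case $p\notin\bdry A\cap\bdry B$, where no polyhedrality is available. Here $p\in\Int A$ or $p\in\Int B$, so one of the normal cones $N_A(p)$, $N_B(p)$ equals $\{0\}$; combined with $x-p\in N_A(p)$ (resp.\ $p-x\in N_B(p)$) this forces $x=p$. If $p\in\Int A$ then $R_A=\Id$ on a neighbourhood of $p$, so for large $n$ the recursion collapses to $x_{n+1}=P_B(x_n)$, which is stationary from the next step by idempotence of $P_B$; the case $p\in\Int B$ is symmetric, with $T_{DR}=P_A$ locally. Either way the method terminates in one further step. I expect the localization lemma of the second paragraph to be the main obstacle, since it is where polyhedrality, the normal-cone characterization of the base points, and nonexpansive confinement of the iterates must be combined to legitimately replace the global sets $A,B$ by their tangent cones.
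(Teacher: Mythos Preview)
Your approach is correct and matches the paper's: localize at $p=P_A(x)$, replace $A,B$ by their tangent cones there, and invoke Corollary~\ref{cor:cones}. The paper is more concise, observing that any convex set is polyhedral at each of its interior points (so $A,B$ are polyhedral at every point of $A\cap B$, making your separate interior case unnecessary) and delegating the localization lemma you spell out by hand to \cite[Theorem~5.2]{BD17}; note also that $P_B(x)=p$ does not follow from the fixed-point equation in general, but since your argument only uses $P_A(x)=P_B(R_A(x))=p$, this slip is harmless.
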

\begin{proof}
%
As a set is polyhedral at each of its interior points, $A$ and $B$ are actually polyhedral at every point in $A\cap B$. Let $(x_n)_{n\in \mathbb{N}}$ be a sequence generated by the Douglas--Rachford method applied to $(A, B)$. According to \eqref{eq:seqDR}, $(x_n)_{n\in \mathbb{N}}$ converges to a point $x\in \Fix T_{DR}$ with $P_A(x)\in A\cap B$. By the local polyhedrality of $A$ and $B$, there exists a neighbourhood of $P_A(x)$ in which $A$ and $B$ coincide with some polyhedral sets. Shrinking the neighbourhood and translating the sets if necessary, $A$ and $B$ coincide with some closed convex cones $A_0$ and $B_0$ in a neighbourhood of $P_A(x)$. Since $A\cap B$ contains more than one point, so does $A_0\cap B_0$. In view of \cite[Theorem~5.2]{BD17} and Corollary~\ref{cor:cones}, $(x_n)_{n\in \mathbb{N}}$ converges finitely to $x$.
\end{proof}

It can be seen that Corollary~\ref{cor:polys} covers the result in $\R^2$ of \cite[Theorem~4.5]{BD17}. We also note that if $A$ is convex and $A\cap \Int B\neq \varnothing$, then $A\cap B$ contains more than one point. Therefore, the result in $\R^2$ of \cite[Theorem~3.7]{BDNP16} follows from Corollary~\ref{cor:polys}.

\medskip

Our second result (Theorem~\ref{thm:badmethods})  establishes that the Douglas--Rachford method is the only one in the family of projection methods having the strong finite convergence behavior given in Theorem~\ref{thm:main-result}, hence justifying  referring to it as the best projection method.

\begin{theorem}\label{thm:badmethods}  
For any allowable choice of the parameters $\mu, \lambda, \kappa $ in Definition~\ref{def:projop}, except for $(\mu, \lambda, \kappa) = (2,2, 1/2)$, namely for all $T = T_{\lambda,\mu}^{\kappa} \neq T_{DR}$, there exists a pair of closed convex cones $A,B\subseteq \R^2$ and a starting point $x_0\in \R^2$ such that, for any $n\in \N$, $T^n(x_0) \notin \Fix T\neq \{0\}$. 
\end{theorem}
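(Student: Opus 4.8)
The plan is to exploit positive homogeneity to reduce the dynamics of $T = T_{\lambda,\mu}^{\kappa}$ to a tractable piecewise-linear form, and then to construct, for each admissible triple $(\mu,\lambda,\kappa)\neq(2,2,1/2)$, an explicit pair of cones on which $T$ has a nontrivial fixed ray that the iterates approach but never reach. Since $A,B$ are cones, each of $P_A,P_B$ is positively homogeneous of degree one, hence so are $P_A^{\lambda}$, $P_B^{\mu}$, and $T$; consequently $T(\tau x)=\tau T(x)$ for $\tau>0$, the set $\Fix T$ is itself a cone, and the iteration is governed by its action on directions (the circle-dynamics viewpoint). In particular, on any open sector $U\subseteq\R^2$ on which the active projection pattern (which faces or relative interiors are hit by $P_A$ and $P_B$) is constant, $T|_U$ is the restriction of a single linear map.

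The building block is a configuration in which $A\cap B$ is a ray, so that $\Fix T\supseteq A\cap B\neq\{0\}$; recall that any common point $x\in A\cap B$ satisfies $P_A(x)=P_B(x)=x$, hence $T(x)=x$. A convenient model is $A$ a line (or ray) through the origin and $B$ a halfplane through the origin, with a tunable angle $\beta$ between $A$ and $\bdry B$. On an invariant sector $U$ adjacent to the fixed ray, a direct computation of $P_B^{\mu}\circ P_A^{\lambda}$ shows that, in coordinates adapted to the fixed ray and its transverse direction, $T|_U$ is linear with eigenvalue $1$ along the ray and a single transient multiplier $\rho=\rho(\mu,\lambda,\kappa,\beta)$ in the transverse direction. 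Provided $\rho\notin\{0,1\}$, the fixed set of $T|_U$ is exactly the ray; choosing $x_0\in U$ off the ray, homogeneity and invariance give $T^n(x_0)\in U$ with transverse component scaling like $\rho^n\neq 0$, so $T^n(x_0)\notin\Fix T$ for every $n$. It therefore suffices to realise $\rho\in(0,1)$, which in addition preserves the sign of the transverse component and keeps $U$ invariant.

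The heart of the argument is to show that, for every admissible triple other than the Douglas--Rachford one, the angle $\beta$ (together with a choice among a small number of such model configurations) can be selected so that $\rho\in(0,1)$. One writes down the dead-beat locus $\rho(\mu,\lambda,\kappa,\beta)=0$ explicitly and checks that, for fixed non-DR parameters, it fails on an open set of admissible angles, while the complementary constraint $\rho\neq 1$ is generic as well; selecting $\beta$ in the admissible window then yields the desired transient multiplier. That the Douglas--Rachford parameters admit no such configuration is not a gap but a consistency check: it is precisely the content of Theorem~\ref{thm:main-result} that $T_{DR}$ converges finitely whenever $\Fix T_{DR}\neq\{0\}$, so the construction is possible exactly on the complement of $(2,2,1/2)$.

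The main obstacle I expect is covering the full parameter continuum uniformly rather than the generic case. Three difficulties must be handled: (i) the degenerate boundary cases $\lambda=2$ and/or $\mu=2$, where the relaxed projections become reflections and the natural sector may fail to be invariant, so the adapted coordinates and the expression for $\rho$ must be recomputed; (ii) large $\kappa$, which can overshoot and flip the sign of the transverse component, forcing one to track two adjacent sectors or to pass to a second model configuration; and (iii) verifying simultaneously, in each regime, that $\Fix T$ is genuinely nontrivial, that the iterates remain in the linearity sector so that the computed $\rho$ governs \emph{every} iterate, and that no iterate accidentally lands on the fixed ray. Organising the admissible parameter space into finitely many regimes, each settled by an explicit cone pair and starting point, is what turns the generic mechanism above into a complete proof.
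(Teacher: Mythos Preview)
Your plan is correct and matches the paper's strategy: for each non-DR triple, exhibit a cone pair on which $T$ is linear in an invariant sector with eigenvalue $1$ along a fixed ray and a nonzero transverse multiplier $\rho$, so the iterates never reach $\Fix T\neq\{0\}$. The paper's execution is, however, simpler than you anticipate. It dispenses with the tunable angle $\beta$ entirely: in three of its six configurations one of the two sets contains all iterates, so only a single relaxed projection is ever active and the transverse multiplier is simply $1-\kappa\mu$ or $1-\kappa\lambda$. These already cover every triple with $\kappa<1/2$, and every triple with $\kappa\geq 1/2$ except the curve $\kappa\mu=\kappa\lambda=1$, i.e., $(\lambda,\mu,\kappa)=(1/t,1/t,t)$ for $t\geq 1/2$. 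Three further fixed configurations dispatch $t\in(1/2,1)$, $t>1$, and $t=1$ (alternating projections), leaving only $t=1/2$, which is Douglas--Rachford. In each example the second coordinate of the iterate is preserved exactly, so the linearity sector is trivially invariant and your anticipated obstacles (i)--(iii) do not materialise; moreover $\rho$ need not lie in $(0,1)$ --- any $\rho\neq 0$ suffices, and for $|\rho|>1$ the iterates diverge but still miss $\Fix T$.
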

We prove Theorem~\ref{thm:badmethods} in Section~\ref{sec:counterexamples} by constructing explicit examples for each choice of the parameters.

\section{Finite convergence of the Douglas--Rachford method}\label{sec:finite}

In this section, we prove Theorem~\ref{thm:main-result}. We do so by first considering the case when the kernel of the Douglas--Rachford operator is a line (Section~\ref{sec:linecase}) and then reducing the remaining cases to circle dynamics (Section~\ref{sec:circle}).

\subsection{The case when the kernel is a line}\label{sec:linecase}

Our goal is to prove Proposition~\ref{prop:Kerline}, which yields Theorem~\ref{thm:main-result} in the case when the kernel of the Douglas--Rachford operator is a line. 
Using the polar, we deduce the following convenient characterization of the kernel of the Douglas--Rachford operator.

\begin{lemma}\label{lem:kernel} 
Let $A,B\subseteq \R^d$ be closed convex cones. Then
\begin{equation}\label{eq:DRkernelRepresentation}
\Ker T_{DR} = \cone\,[(-B \cap A^{\circleddash}) \cup (B^{\circleddash} \cap A)].
\end{equation}
\end{lemma}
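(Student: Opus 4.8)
The plan is to unwind the definition of $T_{DR}$ directly and then use the characterization of the projection onto a cone via its polar. Recall that $T_{DR} = \frac{1}{2}(\Id + R_B R_A)$, so $x \in \Ker T_{DR}$ means $\frac{1}{2}(x + R_B R_A x) = 0$, i.e. $R_B R_A x = -x$. Writing out the reflections, $R_A x = 2P_A x - x$ and then $R_B(R_A x) = 2 P_B(R_A x) - R_A x$, the kernel condition becomes $2 P_B(2P_A x - x) - (2 P_A x - x) = -x$, which simplifies to $P_B(2 P_A x - x) = P_A x$. Let me set $a := P_A x$. The condition is then $P_B(2a - x) = a$, together with $a = P_A x$.

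The key tool will be Moreau's decomposition for cones: for a closed convex cone $C$, every point $y$ splits orthogonally as $y = P_C y + P_{C^\circleddash} y$ with $P_C y \perp P_{C^\circleddash} y$, and $P_C y \in C$, $P_{C^\circleddash} y \in C^\circleddash$. Applying this to $A$, I would write $x = P_A x + P_{A^\circleddash} x = a + (x - a)$, where $a \in A$, $x - a \in A^\circleddash$, and $\langle a, x-a\rangle = 0$. Now I would interpret the condition $P_B(2a - x) = a$ similarly. Note $2a - x = a - (x - a)$, so $2a - x = a + (a - x)$ where $a - x = -(x-a) \in -A^\circleddash = A^\circleddash$ (a cone is symmetric under the polar being a cone too; more precisely $-(x-a) \in -(A^\circleddash)$). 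The condition $P_B(2a-x) = a$ says, again by Moreau, that $a \in B$, that $(2a - x) - a = a - x \in B^\circleddash$, and that $\langle a, a - x\rangle = 0$. Combining: from the $A$-decomposition $\langle a, x - a\rangle = 0$, which is the same orthogonality, so these are consistent.

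From here I would extract the membership conditions cleanly. We have $a \in A$ (from $a = P_A x$) and $a \in B$ (from $P_B(2a-x)=a \in B$); also $x - a \in A^\circleddash$ and $a - x \in B^\circleddash$. The natural next move is to observe that the kernel splits according to whether $a = 0$ or not, and to rewrite things in terms of $x$ rather than the auxiliary $a$. When $a = 0$: then $x \in A^\circleddash$ and $-x \in B^\circleddash$, i.e. $x \in A^\circleddash$ and $x \in -B^\circleddash = (-B)^\circleddash$ — I would need to track signs carefully to land on the two stated pieces $-B \cap A^\circleddash$ and $B^\circleddash \cap A$. The cleaner route is probably to parametrize: set $u = a \in A \cap B$ and $v = x - a$, with $v \in A^\circleddash$, $-v = a - x$... wait, $a - x = -v \in B^\circleddash$ so $v \in -B^\circleddash$. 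Then $x = u + v$ with $u \in A\cap B$ and $v \in A^\circleddash \cap (-B^\circleddash)$, and $\langle u, v\rangle = 0$. I would then show that the cone generated by $(-B \cap A^\circleddash) \cup (B^\circleddash \cap A)$ matches exactly the set of such sums $u + v$; this is where the decomposition $[\cdots]$ and the $\cone$ operation become essential, since an arbitrary kernel element is a combination of a piece in $A$ and a piece related to $B^\circleddash$.

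The main obstacle I anticipate is the bookkeeping of signs and the precise matching of the orthogonal decomposition $x = u + v$ with the union-of-cones expression on the right-hand side of \eqref{eq:DRkernelRepresentation}. In particular, verifying both inclusions — that every element of the $\cone$ of the union lies in $\Ker T_{DR}$ (using that projections onto cones behave well under the orthogonality), and conversely that every kernel element decomposes into the required form — will require carefully invoking the characterization of $P_C$ via the variational inequality $\langle y - P_C y, c - P_C y\rangle \le 0$ for all $c \in C$, specialized to cones where it sharpens to $P_C y \in C$, $y - P_C y \in C^\circleddash$, and $\langle P_C y, y - P_C y\rangle = 0$. I expect the forward inclusion (kernel into the $\cone$ expression) to be the delicate direction, since one must argue that the two orthogonal pieces individually fall into $A^\circleddash$ (resp. $A$) and simultaneously into $-B$ (resp. $B^\circleddash$), so that each summand genuinely lies in one of the two sets being coned together.
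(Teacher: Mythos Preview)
Your overall plan---reduce $x\in\Ker T_{DR}$ to the reflection identity $R_BR_Ax=-x$, then peel it apart using Moreau's decomposition for the cones $A$ and $B$---is exactly the route the paper takes. However, there is a concrete arithmetic slip that sends the whole forward inclusion to the wrong place. From
\[
2P_B(2a-x)-(2a-x)=-x
\]
you conclude $P_B(2a-x)=a$, but the correct simplification is
\[
P_B(2a-x)=a-x.
\]
With the right formula, Moreau applied to $B$ at the point $2a-x$ gives $a-x\in B$ and $(2a-x)-(a-x)=a\in B^{\circleddash}$, not the other way around. Combined with $a\in A$ and $x-a\in A^{\circleddash}$ from the $A$-decomposition, you then obtain $a\in A\cap B^{\circleddash}$ and $x-a\in(-B)\cap A^{\circleddash}$, which matches the two pieces in \eqref{eq:DRkernelRepresentation} exactly, with no further ``sign bookkeeping'' needed.

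Your erroneous version, $a\in A\cap B$ and $x-a\in A^{\circleddash}\cap(-B^{\circleddash})$, is in fact the description of $\Fix T_{DR}=A\cap B+(A-B)^{\circleddash}$ from \eqref{eq:FixDR}, not of $\Ker T_{DR}$; since $\Ker T_{DR}\cap\Fix T_{DR}=\{0\}$, this should have signalled that something upstream was off rather than that more sign-tracking was required. Once the slip is fixed, both inclusions go through just as you outlined, and your argument coincides with the paper's.
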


\begin{proof} 
We first observe that the set $-B \cap A^{\circleddash}$ is orthogonal to $B^{\circleddash} \cap A$, i.e.,
\begin{equation}\label{eq:orthogonaltechnical}
\text{for all~} u \in -B \cap A^{\circleddash} \text{~and all~} v\in B^{\circleddash} \cap A,\quad \langle u,v\rangle = 0.    
\end{equation}
Indeed, take any $ u \in -B \cap A^{\circleddash}$ and $v\in B^{\circleddash} \cap A$. Since $u\in A^{\circleddash} $ and $v\in A$,  we have $\langle u,v \rangle \leq 0$. However, as $u\in -B$ and $v\in B^{\circleddash}$ we also have $\langle u,v \rangle \geq 0$, and so \eqref{eq:orthogonaltechnical} follows.

For convenience, denote  
\[
Q \coloneqq \cone [(-B \cap A^{\circleddash}) \cup (B^{\circleddash} \cap A)].
\]
We show that $Q\subseteq \Ker T_{DR}$. Consider an arbitrary $x\in Q$. Then $x = u+v$ for some $u \in -B \cap A^{\circleddash}$ and $v\in B^{\circleddash} \cap A$ (by virtue of these two sets being closed convex cones). Since $\langle u,v\rangle = 0$ due to \eqref{eq:orthogonaltechnical}, we derive that, for all $w\in A$, 
\begin{equation*}\label{eq:technicalprojxv}
\langle x - v, w-v\rangle = \langle u, w-v\rangle = \langle u, w \rangle \leq 0,
\end{equation*}
because $u\in A^{\circleddash}$. Since $v\in A$, by the angle characterization of projection, we conclude that $v$ is the projection of $x$ onto $A$. That is, $P_A(x) = v$, and thus, $R_A(x) = 2 v - x = 2v - (u+v) = v-u$. 
Likewise, for all $w\in B$, 
\[
\langle (v-u) - (-u), w-(-u)\rangle = \langle v, w+u\rangle = \langle v,w\rangle \leq 0, 
\]
where the final inequality is due to $v\in B^{\circleddash}$. We hence deduce that $-u$ is the projection of $v-u$ onto $B$, i.e.,  $P_B(v-u) = -u$. This yields 
\[
R_B \circ R_A (x) = R_B(v-u) = 2 (-u) - (v-u) = -u-v = -x,
\]
and therefore,
\[
T_{DR}(x) = \frac{1}{2}\left(x+R_B \circ R_A (x)\right) = \frac{1}{2}(x-x) = 0,
\]
which means $x\in \Ker T_{DR}$. 

It remains to show that $\Ker T_{DR} \subseteq Q$. Consider $x\in \Ker T_{DR}$. Then 
\[
0 = T_{DR}(x) = \frac{1}{2}\left (x+ R_B \circ R_A (x)\right),
\]
and thus $R_B \circ R_A (x) = -x$. 
Now let $y= R_A(x)$. Then $P_A(x) = \frac 1 2 (x+y)$ and because $R_B(y) = -x$, also $P_B(y) = \frac 1 2 (y-x)$. It follows that $P_B(y) = -x +P_A(x)$ and $P_A(x) = y -P_B(y)$.

Since $A$ is a cone, for all $\lambda \in (0, +\infty$, we have $\lambda P_A(x) \in A$, and hence by the characterization of projection,
\[
0\geq \langle x-P_A(x) , \lambda P_A(x) - P_A(x)\rangle = (\lambda -1) \langle x-P_A(x) ,P_A(x)\rangle, 
\]
and therefore $\langle x-P_A(x) ,P_A(x)\rangle = 0$. This leads to
\begin{equation}\label{eq:24532352}
\langle P_B(y), P_A(x)\rangle  = -\langle x- P_A(x),P_A(x)\rangle = 0.    
\end{equation}
The angle characterization of projection yields
\begin{equation}\label{eq:24532353}
\text{for all } u\in A,\quad \langle x- P_A(x),u-P_A(x)\rangle \leq 0,
\end{equation}
and 
\begin{equation}\label{eq:24532354}
\text{for all } v\in B,\quad \langle y-P_B(y),v-P_B(y)\rangle \leq 0.
\end{equation}
Now, for all $v\in B$, using~\eqref{eq:24532352},~\eqref{eq:24532353}, and the fact that $P_A(x) = y -P_B(y)$, we have
\[
\langle P_A(x), v\rangle = \langle P_A(x), v\rangle - \langle P_A(x), P_B(y)\rangle  = \langle P_A(x), v- P_B(y)\rangle  = \langle y-P_B(y), v- P_B(y)\rangle \leq 0,
\]
therefore $P_A(x)\in B^\circleddash$.
Analogously, for all $u\in A$, using~\eqref{eq:24532352},~\eqref{eq:24532354}, and the fact that $P_B(y) = -x +P_A(x)$ leads to
\[
\langle P_B(y),u\rangle = \langle P_B(y),u\rangle - \langle  P_B(y), P_A(x)\rangle  = \langle  P_B(y), u 
- P_A(x)\rangle = -\langle  x-P_A(x),
u - P_A(x)\rangle \geq 0,
\]
and thus $- P_B(y) \in A^\circleddash $. 

We conclude 
\[
x = P_A(x)+ (-P_B(y)),
\]
where $P_A(x) \in A\cap B^\circleddash$ and $-P_B(y)\in -B\cap A^\circleddash $, which means that $x\in Q$.
\end{proof}

Note that the proof of Lemma~\ref{lem:kernel} shows that the two sets featuring in the representation of the kernel are orthogonal. It may happen that one of them is trivial, but it is perfectly possible that both of them are nontrivial, for instance, when $A$ and $B$ are two lines orthogonal to each other. These observations may be useful when considering the case of the kernel being a line.

\begin{proposition}\label{prop:Kerline}
Let $T_{DR}\colon\R^2\to \R^2$ be the Douglas--Rachford operator generated by two closed convex cones $A,B\subseteq \R^2$. Suppose that $\Ker T_{DR}$ is a line. Then 
$\Fix T_{DR}\neq \{0\}$ and $T_{DR}(\R^2) \subseteq \Fix T_{DR}$.
\end{proposition}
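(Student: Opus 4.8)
The plan is to extract the geometry of $A$ and $B$ from Lemma~\ref{lem:kernel}, reduce to a normal form, and then compute $T_{DR}$ explicitly. Write $L \coloneqq \Ker T_{DR}$ for the given line and set $U \coloneqq -B\cap A^\circleddash$, $V \coloneqq A\cap B^\circleddash$, so that Lemma~\ref{lem:kernel} gives $L = \cone[U\cup V] = U+V$ with $U\perp V$. Both $U$ and $V$ are contained in $U+V=L$, hence in the one-dimensional subspace $L$; two mutually orthogonal subsets of a line cannot both be nontrivial, so one of them is $\{0\}$ and, since their sum is $L$, the other equals $L$. I treat the case $U=L$, $V=\{0\}$; the case $V=L$, $U=\{0\}$ is analogous (it yields $L\subseteq A$ and $B\subseteq L^\perp$ and is handled by the same argument with the roles of $A$ and $B$ interchanged). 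From $L=U\subseteq A^\circleddash$ together with $L=-L$ I obtain $\langle w,a\rangle=0$ for all $w\in L$, $a\in A$, hence $A\subseteq L^\perp$; and from $L=U\subseteq -B$ and $L=-L$ I obtain $L\subseteq B$.

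Next I would show that $T_{DR}$ leaves the line $L^\perp$ invariant. Since $A\subseteq L^\perp$, for $z\in L^\perp$ one has $P_A(z)\in L^\perp$ and so $R_A(z)=2P_A(z)-z\in L^\perp$. Since $L\subseteq B$, the cone $B$ is invariant under translations by $L$ (for $b\in B$, $\ell\in L$, $b+\ell=\tfrac12(2b)+\tfrac12(2\ell)\in B$), so $B=(B\cap L^\perp)+L$; the orthogonal splitting $\R^2=L^\perp\oplus L$ then gives $P_B(z)=P_{B\cap L^\perp}(z)\in L^\perp$ and hence $R_B(z)\in L^\perp$ for $z\in L^\perp$. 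Therefore $T_{DR}$ maps $L^\perp$ into itself, and on this line it is exactly the one-dimensional Douglas--Rachford operator of the cones $A$ and $B\cap L^\perp$.

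Finally I would identify the configuration. Inside $L^\perp$ the one-dimensional cones $A$ and $C\coloneqq B\cap L^\perp$ can only be $\{0\}$, a half-line, or all of $L^\perp$, and the remaining hypotheses $V=A\cap B^\circleddash=\{0\}$ and $U=L$ translate into the relative-polarity conditions $A\cap C^{\circleddash}=\{0\}$ and $(-C)\cap A^{\circleddash}=\{0\}$ within $L^\perp$. A short inspection of these finitely many one-dimensional possibilities leaves only $A=\{0\},\,B=L$ and $A=L^\perp,\,B=\R^2$; in both cases a direct computation gives $T_{DR}=P_{L^\perp}$. Consequently $\Fix T_{DR}=L^\perp\neq\{0\}$ and $T_{DR}(\R^2)=L^\perp=\Fix T_{DR}$, which proves the proposition.

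I expect the orthogonality reduction and the invariance of $L^\perp$ to be routine; the main obstacle is the concluding identification, and the delicate point there is that it genuinely uses the hypothesis that $\Ker T_{DR}$ is a line rather than a half-plane. Precisely, it is the condition $V=\{0\}$ that is essential: without it the restricted one-dimensional operator could be the projection onto a proper ray instead of the identity. One can avoid the one-dimensional case check for the range inclusion by noting that $T_{DR}$ is firmly nonexpansive, so applying the defining inequality with the second argument ranging over the line $L=\Ker T_{DR}$ forces $T_{DR}(\R^2)\subseteq L^\perp$ directly.
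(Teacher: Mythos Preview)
Your proof is correct and follows essentially the same route as the paper: both invoke Lemma~\ref{lem:kernel} and the orthogonality of $-B\cap A^{\circleddash}$ and $A\cap B^{\circleddash}$ to conclude that one of these sets equals the line $L$ while the other is $\{0\}$, deduce $A\subseteq L^{\perp}$ and $L\subseteq B$ (or the symmetric statement), and then case-analyze to pin down the four admissible pairs $(A,B)$. Your one-dimensional reduction on $L^{\perp}$ and the explicit verification $T_{DR}=P_{L^{\perp}}$ are mild elaborations of what the paper records as ``trivial to check,'' but the argument is the same.
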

\begin{proof}
By Lemma~\ref{lem:kernel},  
\[
\Ker T_{DR} = \cone\,[(-B \cap A^{\circleddash}) \cup (B^{\circleddash} \cap A)].
\]
Since $-B \cap A^{\circleddash}$ and  $B^{\circleddash} \cap A$ are orthogonal, if $\Ker T_{DR}$ is a line, either $-B \cap A^{\circleddash} $ is a line and $B^{\circleddash} \cap A = \{0\}$, or the other way around.

\emph{Case 1:} If \(-B \cap A^{\circleddash}=L\), where $L$ is a line through zero, and \(B^{\circleddash} \cap A\) is \(\{0\}\), then $A,B^{\circleddash}\subseteq L^\perp$. 
    \begin{itemize}
        \item If $A=\{0\}$, then $A^{\circleddash} = \R^2$ and $B = L$;
        \item If $B^{\circleddash}=\{0\}$, then $B=\R^2$ and $A^{\circleddash} = L$;
        \item If $A\neq \{0\}$ and $B^{\circleddash}\neq \{0\}$, we must have $A=-B^{\circleddash}$, coinciding with a half-line orthogonal to $L$, but this is impossible since in this case $-B\cap A^{\circleddash}$ would be a half-space.
    \end{itemize}
It follows that either $A = \{0\}$ and $B=L$ or $A = L^\perp$ and $B=\R^2$.

\emph{Case 2:} If \(B^{\circleddash} \cap A=L\) is a line and \(-B \cap A^{\circleddash}\) is \(\{0\}\), analogously (or by letting $\tilde A = B$, $\tilde B = -A$ and applying the previous case) we have 
    \begin{itemize}
        \item If $B=\{0\}$, then $B^{\circleddash} = \R^2$ and $A = L$;
        \item If $A^{\circleddash}=\{0\}$, then $A=\R^2$ and $B^{\circleddash}= L$;
        \item If $B\neq \{0\}$ and $A^{\circleddash}\neq \{0\}$, then $B^{\circleddash}\cap A$ is a half-space, contradicting the assumption.
    \end{itemize}
    Hence either $A = \R^2$ and $B=L^\perp$ or $B=\{0\}$ and $A = L$. 

For all four possible cases, it is trivial to check that \(T_{DR}(\R^2) \subseteq \Fix T_{DR}\). Moreover, since $\Fix T_{DR} = (A\cap B) +(A-B)^{\circleddash}$ by \eqref{eq:FixDR}, we deduce that $\Fix T_{DR}\neq \{0\}$.
\end{proof}


\subsection{Reduction to circle dynamics when the kernel is not a line}\label{sec:circle}

For the case when the kernel of the Douglas--Rachford operator is not a line, the proof of Theorem~\ref{thm:main-result} can be reduced to the study of circle dynamics of an associated system. We first explain this identification and derive useful results that allow us to proceed with the proof of Proposition~\ref{prop:dichotomy} (and together with Proposition~\ref{prop:Kerline} 
proves Theorem~\ref{thm:main-result}, as we explain at the end of this section).

Note that if $r$ is a ray, then either it is trivial ($r=\{0\}$), or it has a unique intersection with the unit circle at $x/\|x\|$. Likewise, every point on the circle defines a unique ray. We can hence identify points on the unit circle with nontrivial rays.  Denote the \struc{\emph{ray generated by}} $\struc{x}\in \R^2$ by \struc{$[x]$}, i.e., $[x] = \{\lambda x \with \lambda \in (0, +\infty)\}$. Now define the mapping \struc{$\arg$} that maps nontrivial rays to the point on the unit circle \struc{$S^1$},  
\[
\text{for all } x \neq 0,\quad \arg([x]) = x/\|x\|.
\]
Since this mapping is bijective, its inverse is well defined:
\[
\text{for all } t\in S^1,\quad \arg^{-1}(t) = [t].
\]

It turns out that the Douglas--Rachford operator generated by two closed convex cones maps rays to rays, i.e., it is  \struc{\emph{positively homogeneous}}. Moreover, it can only map rays to the trivial ray `once'. Thus the behaviour of the method can be reduced to circle dynamics. We make this more precise in the following results. 

\begin{lemma}\label{lem:homogeneous}
Let $T_{\lambda,\mu}^{\kappa}\colon \R^d\to \R^d$ be the generalized projection operator generated by two closed convex cones $A, B \subseteq \mathbb{R}^d$. Then $T_{\lambda,\mu}^{\kappa}$ is positively homogeneous, that is, for all $x\in \R^d$ and all $\lambda \in (0, +\infty)$,
\[
T_{\lambda,\mu}^{\kappa}(\lambda x) = \lambda T_{\lambda,\mu}^{\kappa}(x).
\]
Consequently, the Douglas--Rachford operator $T_{DR}$ generated by two closed convex cones \(A, B \subseteq \mathbb{R}^d\) is positively homogeneous.
\end{lemma}
\begin{proof}
This follows straightforwardly from the definition of the generalized projection operator. Since the identity mapping and projections onto cones are positively homogenenous, so are their compositions and linear combinations. 
\end{proof}

Lemma~\ref{lem:homogeneous} allows us to introduce a well-defined circle map induced by the Douglas--Rachford operator. 
More generally, suppose that $T\colon\R^2\to \R^2$ is a positively homogeneous mapping. Then $T$ maps rays to rays, that is,  $T([x]) = [T(x)]$. Therefore, we can define the associated \struc{\emph{unit circle operator}} $\struc{\varphi_T}\colon S^1\to S^1\cup \{0\}$, 
\begin{equation}\label{eq:assocoperator}
\text{for all } t \in S^1,\quad \varphi_T(t) \coloneqq \arg (T([t])) = \arg ([T(t)]).
\end{equation}
It is evident that this mapping is well defined, moreover taking the inverses we have 
\begin{equation}\label{eq:assocoperator2}
\arg^{-1} \varphi_T(t) = [T(t)] = T([t]) 
\end{equation}
whenever $T(t)\neq 0$. 

Having zero in the range of $\varphi_T$ prevents us from working exclusively on the circle. Fortunately, we can suitably restrict this mapping to a continuous subset of the circle and hence avoid dealing with this pesky zero. A key observation is that the Douglas--Rachford operator never maps to nonzero points in its kernel, and hence any starting point is either mapped to zero in the first iteration or cannot be mapped to zero in finitely many iterations.

The next result allows us to pinpoint the specific linear pieces that comprise the Douglas--Rachford operator. We note that this is the key result that distinguishes the behavior of this method from the rest of the family $T_{\lambda,\mu}^{\kappa}$, as any other combination of parameters generate different types of linear pieces, that, in particular, prevent us from identifying the set of fixed points of the circle map with the set of fixed points of the generalised projection method operator. 

\begin{proposition}\label{prop:kerintfix}
Let $T_{DR}\colon\R^d\to \R^d$ be the Douglas--Rachford operator generated by two closed convex cones $A,B\subseteq \R^d$. Then 
\[
T_{DR}(\R^d) \cap \Ker T_{DR} = \{0\}.
\]
\end{proposition}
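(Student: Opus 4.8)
The plan is to combine the explicit description of $\Ker T_{DR}$ from Lemma~\ref{lem:kernel} with the explicit ``one-step'' formula for a point in the range of $T_{DR}$, and then to show that any vector lying in both sets must be orthogonal to itself. Throughout, the only tool is the angle characterization of projections onto the cones $A$ and $B$.

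First I would take $z\in T_{DR}(\R^d)\cap\Ker T_{DR}$ and use Lemma~\ref{lem:kernel} together with the orthogonality \eqref{eq:orthogonaltechnical} to write $z=a+b$ with $a\in A\cap B^{\circleddash}$, $b\in -B\cap A^{\circleddash}$, and $\langle a,b\rangle=0$. Consequently $\langle z,a\rangle=\|a\|^2$ and $\langle z,b\rangle=\|b\|^2$, so the whole claim reduces to proving that both of these inner products are nonpositive, which would force $a=b=0$ and hence $z=0$. To exploit the membership $z\in T_{DR}(\R^d)$, I would write $z=T_{DR}(w)$ and rewrite the operator as $T_{DR}(w)=w-P_A(w)+P_B(R_A(w))$. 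Setting $p\coloneqq P_A(w)\in A$ and $q\coloneqq P_B(R_A(w))\in B$ gives $z=w-p+q$, and since $A,B$ are cones the standard cone-projection identities yield $w-p\in A^{\circleddash}$ (with $\langle w-p,p\rangle=0$) and $R_A(w)-q=2p-w-q\in B^{\circleddash}$.

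The $a$-component is then immediate: $\langle z,a\rangle=\langle w-p,a\rangle+\langle q,a\rangle\le 0$, because $w-p\in A^{\circleddash}$ while $a\in A$, and $q\in B$ while $a\in B^{\circleddash}$. Combined with $\langle z,a\rangle=\|a\|^2$, this gives $a=0$. I expect the main obstacle to be the $b$-component: since both $w-p$ and $b$ lie in $A^{\circleddash}$, the term $\langle w-p,b\rangle$ carries no useful sign, so the analogous direct computation fails. The remedy is to route the argument through the reflected point instead. From $2p-w-q\in B^{\circleddash}$ and $-b\in B$ one gets $\langle w+q-2p,b\rangle\le 0$; noting that $w+q-2p=z-p$ and that $\langle p,b\rangle\le 0$ (as $p\in A$ and $b\in A^{\circleddash}$), I conclude $\langle z,b\rangle\le\langle p,b\rangle\le 0$, hence $b=0$.

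Putting the two pieces together yields $z=a+b=0$, which is exactly the asserted equality $T_{DR}(\R^d)\cap\Ker T_{DR}=\{0\}$ (the reverse inclusion is trivial since $0=T_{DR}(0)$ lies in both sets). The only delicate point in the whole argument is the use of the reflection structure for the $b$-component; everything else is a routine application of the cone-projection inequalities, and the proof works verbatim in $\R^d$, matching the generality of the statement.
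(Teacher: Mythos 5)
Your proof is correct, but it takes a genuinely different route from the paper's. You combine the explicit kernel description from Lemma~\ref{lem:kernel} (writing $z=a+b$ with $a\in A\cap B^{\circleddash}$, $b\in -B\cap A^{\circleddash}$, $\langle a,b\rangle=0$) with the one-step formula $T_{DR}(w)=w-P_A(w)+P_B(R_A(w))$ and the Moreau-type facts $w-P_A(w)\in A^{\circleddash}$, $R_A(w)-P_B(R_A(w))\in B^{\circleddash}$; the sign bookkeeping for both components checks out, including the non-obvious detour through the reflected point to handle $\langle z,b\rangle$. The paper instead avoids the kernel lemma entirely: it supposes $y=T_{DR}(x)\in\Ker T_{DR}\setminus\{0\}$, invokes firm nonexpansiveness of $T_{DR}$ together with positive homogeneity to get $(1+\lambda)\|T_{DR}(x)\|^2\le\langle x,T_{DR}(x)\rangle$ for all $\lambda>0$, and lets $\lambda\to+\infty$. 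The paper's argument is shorter and would apply verbatim to any positively homogeneous firmly nonexpansive operator, so it isolates exactly which abstract properties are responsible for the conclusion; your argument is more structural and self-contained at the level of cone geometry, reuses the decomposition already established in Lemma~\ref{lem:kernel}, and pinpoints precisely which inner products must vanish, at the cost of being tied to the specific form of $T_{DR}$ on a pair of cones. Both work in $\R^d$ as required.
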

\begin{proof} 
Suppose the statement is not true, then there exists $y\in T_{DR}(\R^d)\cap \Ker T_{DR}$ such that $y\neq 0$. Since $y\in T_{DR}(\R^d)$ and $y\neq 0$, there must be some $x\in \R^d$ with $x\notin \Ker T_{DR}$ such that $T_{DR}(x) = y$. 
Let $\lambda \in (0, +\infty)$. Since $T_{DR}$ is firmly nonexpansive~\cite[Proposition~3.1]{BCL04}, we have from \cite[Proposition~4.4(i)\&(iv)]{BC17} that
\begin{align*}
\|T_{DR}(x)-T_{DR}(\lambda y)\|^2 \leq \langle x-\lambda y, T_{DR}(x) - T_{DR}(\lambda y)\rangle.
\end{align*}
Since $y = T_{DR}(x) \in \Ker T_{DR}$, by Lemma~\ref{lem:homogeneous} we have \(T_{DR}(\lambda y) = \lambda T_{DR}(y) =0\). Therefore,
\begin{align*}
    \|T_{DR}(x)\|^2 \leq \langle x-\lambda T_{DR}(x), T_{DR}(x)\rangle,
\end{align*}
which yields
\begin{align*}
(1+\lambda)\|T_{DR}(x)\|^2 \leq \langle x,  T_{DR}(x) \rangle.
\end{align*}
Since $T_{DR}(x) = y \neq 0$, letting $\lambda$ go to positive infinity, the left hand side approaches positive infinity, which is impossible, since the right hand side \(\langle T_{DR}(x),x \rangle\) is a constant. We reached a contradiction, hence our assumption is wrong and the original statement is true. 
\end{proof}

\begin{proposition}\label{prop:mappingtypes} 
Let $T_{DR}\colon\R^2\to \R^2$ be the Douglas--Rachford operator generated by two closed convex cones $A,B\subseteq \R^2$. Then $T_{DR}$ is a (continuous) piecewise linear mapping such that each piece is one of the following types of a linear mapping:
\begin{enumerate}
    \item a composition of a rotation by some fixed angle $\theta$ and a scaling by $|\cos \theta|$;
    \item a projection onto some fixed line; 
    \item a projection onto zero; or
    \item the identity mapping.
\end{enumerate}
\end{proposition}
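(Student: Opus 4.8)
The plan is to peel $T_{DR}$ apart into its linear building blocks and to track what each block does on a common conical refinement of the plane, working outward from $P_A,P_B$ to $R_A,R_B$, then to $R_B\circ R_A$, and finally to the average.

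First I would classify the projection onto a single planar cone. Every nonzero closed convex cone in $\R^2$ is a ray, a line, an angular sector of opening in $(0,\pi)$, a closed half-plane, or all of $\R^2$; in each case the plane splits into finitely many closed conical regions on which $P_A$ (resp.\ $P_B$) is linear, and the linear piece is exactly one of: the identity $\Id$ (on the cone itself, when it is full-dimensional), the orthogonal projection onto a line through the origin (onto a bounding edge-ray, or onto the cone itself when it is a line), or the zero map (on the polar cone). To identify the regions I would use Moreau's decomposition together with the angle characterization of projection, recognizing them as the cone, its two edge-normal strips, and its polar; each piece is genuinely linear rather than affine because the cone is homogeneous. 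Continuity and finiteness of this decomposition are standard.

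Passing to reflections, $R_A = 2P_A-\Id$ is piecewise linear on the same regions, with pieces $\Id$ (where $P_A=\Id$), the orthogonal reflection across a line $\ell$ through the origin (where $P_A=P_\ell$, since $2P_\ell-\Id$ is exactly that reflection), or $-\Id$ (where $P_A=0$); the same holds for $R_B$. Taking the common refinement of the decomposition for $R_A$ with the pullbacks under $R_A$ of the regions for $R_B$ (legitimate because each piece of $R_A$ is invertible), the composition $R_B\circ R_A$ is linear on each cell and equals a product of two elements of $\{\Id,-\Id,\text{line reflection}\}$. All of these lie in $O(2)$, which is closed under composition, so on each cell $R_B\circ R_A\in O(2)$; sorting by determinant, it is either a rotation $R_\theta$ (with $\Id$ and $-\Id$ as the cases $\theta=0,\pi$) or an orthogonal reflection across a line.

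Finally I apply the averaging $T_{DR}=\tfrac12(\Id+R_B\circ R_A)$ cellwise. If $R_B\circ R_A$ is the reflection across a line $\ell$, then $\tfrac12(\Id+\text{refl}_\ell)$ fixes $\ell$ and annihilates $\ell^\perp$, hence is the orthogonal projection onto $\ell$ (type (ii)). If $R_B\circ R_A=R_\theta$, the half-angle identities $1+\cos\theta=2\cos^2(\theta/2)$ and $\sin\theta=2\sin(\theta/2)\cos(\theta/2)$ give $\tfrac12(\Id+R_\theta)=\cos(\theta/2)\,R_{\theta/2}$; writing $\cos(\theta/2)=\pm|\cos(\theta/2)|$ and absorbing a negative sign via $R_{\theta/2}\mapsto R_{\theta/2+\pi}$, this is a rotation by some angle $\theta'$ scaled by $|\cos\theta'|$ (type (i)), with the degenerate instances $\theta=0$ and $\theta=\pi$ collapsing to the identity (type (iv)) and the zero map (type (iii)). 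Since all constituent decompositions are finite and all constituent maps are continuous, $T_{DR}$ is a continuous piecewise linear map of exactly the four advertised types. The main obstacle is not any single estimate but the bookkeeping of the common refinement together with the half-angle identity $\tfrac12(\Id+R_\theta)=\cos(\theta/2)R_{\theta/2}$: it is precisely this identity that forces the Douglas--Rachford pieces to be rotation-scalings rather than arbitrary linear maps, and it is what the subsequent circle-dynamics argument will exploit; a minor point to verify carefully is that type (ii), being rank one, is genuinely distinct from the (rank two or rank zero) rotation-scalings, so that the four cases are exhaustive and correctly attributed.
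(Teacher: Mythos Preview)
Your proof is correct and follows essentially the same approach as the paper's: both identify the pieces of $R_A,R_B$ as elements of $\{\Id,-\Id,\text{line reflection}\}$, compose, and average with the identity. Your organization via the observation $R_B\circ R_A\in O(2)$ (rotation vs.\ reflection) together with the explicit half-angle identity $\tfrac12(\Id+R_\theta)=\cos(\theta/2)R_{\theta/2}$ is a clean condensation of the paper's $3\times 3$ case table, but the underlying argument is the same.
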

\begin{proof}
Depending on the point $x\in \R^2$ the projection $P_C$ of $x$ onto a closed convex cone $C\subseteq \R^2$ is either a reflection in zero (i.e., $P_C(x) = -x$), a reflection in a line (that corresponds to one of the (at most two) edges of the cone $C$), or the identity mapping. Therefore, $P_C(x)$ is a piecewise linear operator with parts being one of these three possibilities. Hence, for every point $x\in \R^2$,  the Douglas--Rachford operator is a composition of a finite number of such elementary linear mappings, and thus is in itself a piecewise linear mapping. Based on whether for a particular point $x$ the reflections $R_A(x)$ and $R_B(R_A(x))$ are reflections in zero, in a line, or are identity mappings, we have the nine cases listed in Table~\ref{table:T}.
Note that if both \(R_A,R_B\) are reflections in lines $L_1$ and $L_2$ respectively, suppose the angle between $L_1$ and $L_2$ is $\theta$, then \(|\arg (x)-\arg R_B(R_A(x))|=2\theta\) so that \(\|T_{DR}(x)\|=|{\rm cos}\, \theta|\|x\|\).

\begin{table}[!ht]
\begin{center}
\begin{tabular}{p{8em} | p{9em} p{7em} p{7em}}
\diag{0.1em}{8em}{\;\;$R_B$}{$R_A$\;\;\hfill} & \multicolumn{1}{c}{Reflect in a line $L_1$} & \multicolumn{1}{c}{Reflect in $0$} & \multicolumn{1}{c}{Identity} \\
\hline
Reflect in a line $L_2$ & Rotate by an angle $\theta$ and scale by \(|\cos  \theta|\) & Project onto $L_2^\perp$ & Project onto $L_2$ \\

Reflect in $0$ & Project onto \(L_1^{\perp}\) & Identity & Project onto 0 \\

Identity & Project onto \(L_1\) & Project onto 0 & Identity 
\end{tabular}
\end{center}
\caption{Possible linear pieces of the mapping $T_{DR}$.}
\label{table:T}
\end{table}
\end{proof}

\begin{proposition}\label{prop:typesphi} 
Suppose that $I$ is a connected subset of the unit circle and $\varphi\colon I \to I$ is a continuous piecewise function, such that each of the pieces is either an identity, maps to some particular point, or is a fixed rotation. 
If $\Fix \varphi \neq \varnothing$, then there exists an $n\in \mathbb{N}$ such that $\varphi^n (I)\subseteq \Fix \varphi$. 
\end{proposition}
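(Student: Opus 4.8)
The plan is to reduce the circle dynamics to the iteration of a monotone one-dimensional map and to track the two endpoints of the nested images $\varphi^n(I)$. I would first dispose of the degenerate case $I=S^1$: the hypothesis $\Fix\varphi\neq\varnothing$ rules out $\varphi$ being a single nontrivial rotation (which is fixed-point-free), so $\varphi$ is either the identity (then $\Fix\varphi=S^1$ and we are done) or it has a constant piece, in which case $\varphi$ is $1$-Lipschitz on each piece and collapses positive length, whence $\varphi(S^1)$ is a connected set of length $<2\pi$, i.e.\ a proper closed sub-arc, and the argument may restart from it. So assume $I$ is a proper arc and fix a continuous angular coordinate $\beta\colon I\to\R$ identifying $I$ with a real interval $[\alpha,\beta]$. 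In this chart every piece of $\varphi$ becomes $s\mapsto s$, $s\mapsto\text{const}$, or $s\mapsto s+\theta$, each of slope $0$ or $1$; hence $\tilde\varphi\coloneqq\beta\circ\varphi\circ\beta^{-1}$ is continuous, non-decreasing and $1$-Lipschitz. (Note that for a proper arc $\tilde\varphi$ automatically has a fixed point by the intermediate value theorem, so $\Fix\varphi\neq\varnothing$ is only a genuine restriction in the $S^1$ case.)

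Because a continuous non-decreasing map sends an interval $[a,b]$ onto $[\tilde\varphi(a),\tilde\varphi(b)]$, the iterated images satisfy $J_n\coloneqq\varphi^n(I)=[a_n,b_n]$ with $a_{n+1}=\tilde\varphi(a_n)$ and $b_{n+1}=\tilde\varphi(b_n)$: the two endpoint dynamics decouple into scalar orbits of $\tilde\varphi$. Since $\varphi(I)\subseteq I$ gives $J_1\subseteq J_0$, and monotonicity of $\tilde\varphi$ propagates this inclusion, $(a_n)$ is non-decreasing and $(b_n)$ is non-increasing; as $\alpha\le a_n\le b_n\le\beta$ both orbits are bounded and converge, to limits $a_\infty\le b_\infty$ which are fixed points of $\tilde\varphi$ by continuity.

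The crux — and the main obstacle — is to upgrade this convergence to \emph{finite} stabilisation, and this is exactly where the slope-$\{0,1\}$ structure (and hence the specific list of admissible pieces) is indispensable; a generic $1$-Lipschitz non-decreasing map need not stabilise in finite time. Consider $b_n\downarrow b_\infty$ with $b_{n+1}=\tilde\varphi(b_n)\le b_n$. On an identity piece $\tilde\varphi$ fixes $b_n$ and the orbit stops; on a constant piece it drops to the fixed value $c$ and then stays (so at most one strict decrease); and on a rotation piece $b_{n+1}=b_n+\theta$ with $\theta\le0$ forced by $b_{n+1}\le b_n$, so each step is a strict drop by the fixed amount $|\theta|>0$ and the orbit leaves that piece after at most $(\text{length})/|\theta|$ steps. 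Since the orbit is monotone it never re-enters a piece it has passed below, and there are finitely many pieces, so only finitely many strict decreases can occur before $b_{n+1}=b_n$; the delicate point to make airtight is the behaviour at the breakpoint where $b_\infty$ sits, which I would settle using continuity of $\tilde\varphi$ to exclude an identity piece abutting a rotation piece and to pin down the value of any abutting constant piece. The same reasoning stabilises $(a_n)$.

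Finally, once $a_n=a_\infty$ and $b_n=b_\infty$ for all $n\ge N$, we have $\varphi^N(I)=[a_\infty,b_\infty]=:\bar J$ and $\tilde\varphi(\bar J)=\bar J$ with both endpoints fixed by $\tilde\varphi$. For any $s\in\bar J$ the $1$-Lipschitz bounds against the two fixed endpoints give $\tilde\varphi(s)-a_\infty\le s-a_\infty$ and $b_\infty-\tilde\varphi(s)\le b_\infty-s$, i.e.\ $\tilde\varphi(s)\le s$ and $\tilde\varphi(s)\ge s$; hence $\tilde\varphi=\Id$ on $\bar J$. Translating back, $\varphi^N(I)=\bar J\subseteq\Fix\varphi$, which is the desired conclusion with $n=N$.
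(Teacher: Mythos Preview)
Your argument is correct and follows a genuinely different route from the paper's. The paper works intrinsically with the arc-length metric $d$ on $I$: it first shows $\varphi$ is nonexpansive for $d$, then argues by compactness that some $\varepsilon$-neighbourhood of $\Fix\varphi$ is mapped into $\Fix\varphi$ in one step (because near a fixed point the only piece compatible with continuity is a constant-to-that-fixed-point piece), and concludes via $d(\varphi(x),\Fix\varphi)\le d(x,\Fix\varphi)-\varepsilon$ that every orbit reaches $\Fix\varphi$ after at most $2\pi/\varepsilon$ iterations. You instead linearise to a real interval and exploit the order structure: the slope-$\{0,1\}$ pieces force $\tilde\varphi$ to be nondecreasing and $1$-Lipschitz, so the dynamics of $\varphi^n(I)$ decouples into two scalar endpoint orbits whose finite stabilisation follows from a piece-counting argument, and the final $1$-Lipschitz sandwich between the two fixed endpoints pins down the whole limiting interval as fixed. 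The paper's metric argument is shorter and yields an explicit uniform bound; your order-theoretic argument makes more transparent why precisely the slope set $\{0,1\}$ is what drives finite stabilisation. Two small points to tighten: your phrase ``drops to the fixed value $c$ and then stays'' is not literally correct when $c$ lies outside that constant piece --- the right statement (still sufficient for the counting) is that the orbit makes at most one strict decrease \emph{within} a given constant piece before exiting it to the left; and since the proposition allows $I$ to be non-closed (indeed the paper applies it to an open arc), you should note that the $1$-Lipschitz map $\tilde\varphi$ extends continuously to $[\alpha,\beta]$ before running the endpoint argument.
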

\begin{proof}
First, note that $\varphi$ is nonexpansive with respect to arc lengths. Indeed, since $I$ is connected, for any $x,y\in I$, at least one of the two arcs connecting them lies in $I$. By \struc{$d(x,y)$} denote the length of the shortest of these arcs (in case there are two). We claim that the mapping $\varphi$ is nonexpansive with respect to this metric. Assume the contrary, then there exist some $x,y\in I$ such that the nonexpansiveness is broken with some $\alpha>1$,
that is,
\[
d(\varphi(x), \varphi(y)) \geq \alpha d(x,y).
\]
On the shortest arc connecting $x$ and $y$, choose the midpoint $z$. Then at least one of the two resulting segments will also fail the nonexpansiveness with the same value $\alpha$. We can continue the subdivisions until we generate a converging sequence of nested segments $[x_n, y_n]$ that converges to some $\bar x$. Since $\bar x\in [x_n,y_n]$, either $[x_n,\bar x]$ or $[\bar x,y_n]$ breaks nonexpansiveness. Choosing the expansiveness failing interval and relabeling, we have $x_n\to \bar x$, and $\varphi $ breaking nonexpansiveness on each one of these segments with the same $\alpha$. Because there are finitely many pieces, we can refine the sequence further to ensure that $x_n$ is always mapped by the same piece. By continuity, the value $\varphi(\bar x)$ must be consistent. This piece is nonexpansive, since it is either a rotation, a constant map or the identity map, so we have a contradiction and thus, $\varphi$ is nonexpansive. 

By continuity of the mapping $\varphi$, there must be a sufficiently small neighborhood of $\Fix \varphi$ in $I$ where $\varphi$ is the projection onto some point in $\Fix \varphi$. Indeed, if this is not the case, then there exists a sequence $(x_n)_{n\in \mathbb{N}}$  of points in $I \setminus \Fix \varphi $ such that $\dist (x_n, \Fix \varphi)$ approaches zero, and $\varphi (x)$ is identified with a piece that is not a projection onto $\Fix \varphi$. Since the circle is compact, we can assume that $(x_n)_{n\in \mathbb{N}}$ converges to some point $\bar x \in \Fix \varphi$, and because the number of pieces is finite, we can choose a subsequence of $(x_n)_{n\in \mathbb{N}}$ such that $\varphi$ acts on $x$ using the same piece. If this piece is a fixed rotation, then $\varphi(\bar x) \neq \bar x$, which is impossible. If the piece is a projection onto some point, then this point must be $\bar x$, which contradicts our assumption.
We conclude that there exists a sufficiently small neighborhood of $\Fix \varphi$, say, of size $\varepsilon$ such that this entire neighborhood is mapped to the set of fixed points. Then for any point $x\in I$ we have either $\varphi(x)\in \Fix \varphi$ or we connect $x$ to $\Fix \varphi$ with the shortest arc (this is possible since the set of fixed points is closed in $I$) and take the point $y$ on this arc at the distance $\varepsilon$ of the endpoint. Then we have 
\[
d(\varphi (x), \Fix \varphi)  \leq d (\varphi (x), \varphi(y)) \leq d(x,y)  =  d(x,\Fix \varphi) - \varepsilon.
\]
Since with each application we either bring the point to the set of fixed points or reduce the distance to the set of fixed points by $\varepsilon$, after at most $n= 2\pi/\varepsilon$ iterations we have $\varphi^n(I) \subseteq \Fix \varphi$.   
\end{proof}

\begin{proposition}\label{prop:dichotomy} 
Let $T_{DR}\colon\R^2\to \R^2$ be the Douglas--Rachford operator generated by two closed convex cones $A,B\subseteq \R^2$. Suppose that $\Ker T_{DR}$ is not a line. Then exactly one of the following alternatives holds.
\begin{enumerate}
\item\label{prop:dichotomy_not0}
If $\Fix T_{DR} \neq  \{0\}$, then there is an $n\in \N$ such that $T_{DR}^n(\R^2) \subseteq \Fix T_{DR}$.
\item\label{prop:dichotomy_0}
If $\Fix T_{DR} =  \{0\}$, then for any $n\in \N$, $T_{DR}^n(\R^2\setminus \Ker T_{DR}) \cap \Fix T_{DR} = \varnothing$.
\end{enumerate}
\end{proposition}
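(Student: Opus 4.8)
The plan is to prove the two conditional statements separately; since their hypotheses ($\Fix T_{DR}=\{0\}$ versus $\Fix T_{DR}\neq\{0\}$) are mutually exclusive and exhaustive, this establishes that exactly one alternative holds. Throughout I would write $K\coloneqq \Ker T_{DR}\cap S^1$ and $I\coloneqq S^1\setminus K$, and let $\varphi\coloneqq\varphi_{T_{DR}}$ be the associated unit-circle operator from \eqref{eq:assocoperator}. Alternative \ref{prop:dichotomy_0} is the easy half and needs no circle dynamics: when $\Fix T_{DR}=\{0\}$ the claim reduces to showing $T_{DR}^n(x)\neq 0$ whenever $x\notin\Ker T_{DR}$. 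I would argue by induction using Proposition~\ref{prop:kerintfix}: if $y\notin\Ker T_{DR}$ then $T_{DR}(y)\neq 0$, and since $T_{DR}(y)\in T_{DR}(\R^2)$, Proposition~\ref{prop:kerintfix} forces $T_{DR}(y)\notin\Ker T_{DR}$. Hence the whole forward orbit of a point outside the kernel stays outside the kernel, and in particular never equals $0\in\Ker T_{DR}$.

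For alternative \ref{prop:dichotomy_not0} I would pass to the circle. Since by Lemma~\ref{lem:kernel} the kernel is a convex cone (a Minkowski sum of two convex cones) that is not a line, its trace $K$ on $S^1$ is \emph{connected} --- it is empty, a single point, a proper arc, a closed half-circle, or all of $S^1$ --- so $I$ is connected. (The degenerate case $I=\varnothing$, i.e.\ $\Ker T_{DR}=\R^2$, forces $T_{DR}\equiv 0$ and $\Fix T_{DR}=\{0\}$, which falls under \ref{prop:dichotomy_0}.) By Proposition~\ref{prop:kerintfix}, $\varphi$ maps $I$ into $I$: for $t\in I$ we have $T_{DR}(t)\neq 0$ and $T_{DR}(t)\notin\Ker T_{DR}$, so $\varphi(t)\in I$ and $\varphi$ is continuous there. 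Reading Proposition~\ref{prop:mappingtypes} on $I$, the ``projection onto zero'' pieces lie entirely in $K$ and are absent; a ``rotation and scaling by $|\cos\theta|$'' piece becomes an honest rotation (its directions with $\cos\theta=0$ lie in $K$); and a ``projection onto a line'' piece becomes a constant map. Together with identity pieces, $\varphi|_I$ meets exactly the hypotheses of Proposition~\ref{prop:typesphi}.

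The crux is the identification of fixed points, which is where the special structure of $T_{DR}$ (as opposed to a general $T_{\lambda,\mu}^{\kappa}$) enters. I claim $\Fix\varphi = I\cap\Fix T_{DR}$, i.e.\ every circle fixed point $s$ is a \emph{genuine} fixed point $T_{DR}(s)=s$, not merely a fixed direction $T_{DR}(s)=\rho s$ with $\rho>0$. Checking this piece by piece via Proposition~\ref{prop:mappingtypes}: an identity piece gives $\rho=1$ trivially; a rotation-and-scaling piece fixes the direction of $s$ only when the angle is $0$, where the scaling factor $|\cos 0|=1$; and a projection onto a line fixes the direction of $s$ only when $s$ lies along that line, where the projection acts as the identity on $s$. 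In every case $\rho=1$. Now if $\Fix T_{DR}\neq\{0\}$, choose $0\neq x\in\Fix T_{DR}$; then $x\notin\Ker T_{DR}$, so $\arg([x])\in I\cap\Fix T_{DR}=\Fix\varphi\neq\varnothing$, and Proposition~\ref{prop:typesphi} yields $n\in\N$ with $\varphi^n(I)\subseteq\Fix\varphi$. Transferring back by positive homogeneity (Lemma~\ref{lem:homogeneous}): for $x\notin\Ker T_{DR}$ with $t=\arg([x])$, the relation $\varphi^n(t)=s\in\Fix\varphi$ gives $T_{DR}^n(t)=\rho s$ with $\rho>0$ and $T_{DR}(s)=s$, so $T_{DR}(T_{DR}^n(t))=\rho T_{DR}(s)=\rho s=T_{DR}^n(t)$ and hence $T_{DR}^n(x)=\|x\|\,T_{DR}^n(t)\in\Fix T_{DR}$; points of $\Ker T_{DR}$ and the origin map to $0\in\Fix T_{DR}$ after one step. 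Thus $T_{DR}^n(\R^2)\subseteq\Fix T_{DR}$.

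The main obstacle --- the step requiring the most care --- is the bookkeeping linking the two-dimensional piecewise-linear picture of Proposition~\ref{prop:mappingtypes} to the one-dimensional circle dynamics: verifying that $I$ is genuinely connected (exactly where ``not a line'' is used, and why the line case is split off into Proposition~\ref{prop:Kerline}), that no ``projection-onto-zero'' piece survives on $I$ so that $\varphi|_I$ satisfies the hypotheses of Proposition~\ref{prop:typesphi}, and above all the identification $\Fix\varphi=I\cap\Fix T_{DR}$, since finite convergence on the circle transfers to finite convergence of $T_{DR}$ only once circle fixed points are known to be true fixed points.
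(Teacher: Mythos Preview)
Your proposal is correct and follows essentially the same route as the paper's proof: both arguments reduce to circle dynamics on the connected arc $I=S^1\setminus(\Ker T_{DR}\cap S^1)$, invoke Proposition~\ref{prop:kerintfix} to keep orbits in $I$, apply Proposition~\ref{prop:typesphi}, and transfer back via positive homogeneity. Your piece-by-piece verification of the identification $\Fix\varphi=I\cap\Fix T_{DR}$ is in fact more explicit than the paper's treatment of the same point (the paper states it as a consequence of Lemma~\ref{lem:homogeneous}, though it ultimately also rests on the catalogue in Proposition~\ref{prop:mappingtypes}).
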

\begin{proof} 
Let $W = \R^2 \setminus \Ker T_{DR}$. Observe that $0\notin W$ and that, by Proposition~\ref{prop:kerintfix}, $T_{DR}(W)\subseteq W$. 

To show~\ref{prop:dichotomy_0}, let $\Fix T_{DR} = \{0\}$. Then $\Fix T_{DR}\cap W = \varnothing$, and given that $T_{DR}^n(W) \subseteq W$, we conclude that $T_{DR}^n(W) \cap \Fix T_{DR} = \varnothing$ for all $n$. 

For~\ref{prop:dichotomy_not0}, assume $\Fix T_{DR}\neq \{0\}$. 
Recall that by Lemma~\ref{lem:kernel}, $\Ker T_{DR}$ is a convex cone. Hence its complement $W$ is also a cone. Moreover, since $\Ker T_{DR}$ is a convex cone but not a line, its intersection with the unit circle is connected. The set $I = W\cap S^1$ is therefore connected. Thus, restricting domain and codomain to $I$, the mapping $\varphi_T$ in~\eqref{eq:assocoperator} is well defined. 
Taking into account that $0 \notin T_{DR}(W)$, Lemma~\ref{lem:homogeneous} leads firstly to $T_{DR}([x])= [x]$ being equivalent to $T_{DR}(x) = x$, that is, $T_{DR}$ maps a ray to itself if and only if it fixes every point of the ray. This means that restricting $T_{DR}$ to $W$ and $\varphi_{T_{DR}}$ to $I $ we have 
\begin{equation}\label{eq:fixcoincide}
\arg([\Fix_W T_{DR}]) = \Fix_I \varphi_{T_{DR}}, \;\text{ or equivalently }  [\Fix_W T_{DR}] = \arg^{-1}(\Fix_I \varphi_{T_{DR}}),
\end{equation}
where by \struc{$[S]$} we denote the set of all rays in a cone $S$. 

Secondly, Proposition~\ref{prop:mappingtypes} yields that restricted to the rays in $[W]$, the mapping $T_{DR}([x])$ is piecewise linear with pieces being identity, rotation, or a mapping onto a (different) fixed ray. 
It is easy to see that since $T_{DR}$ is nonexpansive, and hence continuous on $W$, the associated $\varphi_{T_{DR}}$ must be continuous on~$I$. 
From~\eqref{eq:fixcoincide} we conclude that $\Fix_I \varphi_{T_{DR}} \neq \varnothing$. It follows from Proposition~\ref{prop:typesphi} that there exists some $n\in \N$ such that $\varphi_{T_{DR}}(I)\subseteq \Fix_I \varphi_{T_{DR}}$. Then for any $[x]\subseteq [W]$,
\begin{align*}
T_{DR}^n([x])  \overset{\eqref{eq:assocoperator2}}{=}  T_{DR}^{n-1}(\arg^{-1} \varphi_{T_{DR}}(I)) \overset{\eqref{eq:assocoperator2}}{=}  T_{DR}^{n-2}(\arg^{-1} \varphi_{T_{DR}}^2(I)) = \cdots = \arg^{-1} \varphi_{T_{DR}}^n(I).  
\end{align*}
Now note that 
\begin{align*}
\arg^{-1} \varphi_{T_{DR}}^n(I) \overset{\text{Prop.}~\ref{prop:typesphi}}{\subseteq} \arg^{-1} \Fix_I \varphi_{T_{DR}} \overset{\eqref{eq:fixcoincide}}{=}  [\Fix_W T_{DR}].
\end{align*}
We conclude, $T_{DR}^n (W) = T_{DR}^n(\R^2\setminus \Ker T_{DR})\subseteq \Fix T_{DR}$ which together with $T_{DR}(\Ker T_{DR}) = \{0\}\subseteq \Fix T_{DR}$ (by Proposition~\ref{prop:kerintfix}) proves the case~\ref{prop:dichotomy_not0}.
\end{proof}

Combining the results of Section~\ref{sec:finite}  finally shows Theorem~\ref{thm:main-result}. 

\begin{proof}[Proof of Theorem~\ref{thm:main-result}] If $\Ker T_{DR}$ is not a line the statement follows from Proposition~\ref{prop:dichotomy}. In the case when the kernel is a line, Proposition~\ref{prop:Kerline} shows that the set of fixed points is nontrivial and 
the Douglas--Rachford method converges in at most one step.
\end{proof}

\section{Counterexamples}\label{sec:counterexamples}

In this section, we prove Theorem~\ref{thm:badmethods} by constructing explicit examples for each choice of the three parameters, where the method does not converge finitely, even then the operator has a nontrivial set of fixed points. We present six different (families of) counterexamples, each covering a subset of the parameter space from the Definition~\ref{def:projop}. Note that $A \cap B \subseteq \Fix T_{\lambda,\mu}^{\kappa}$ for all possible choices of $(\lambda,\mu,\kappa)$. In the examples below, we choose $A \cap B$ to be nontrivial so that each $\Fix T_{\lambda,\mu}^{\kappa}$ is nontrivial as well. The union of these subsets together with the choice of parameters that corresponds to the Douglas--Rachford method covers the whole parameter space. 

More specifically, in each example we demonstrate that for the prescribed choice of parameters $(\lambda,\mu,\kappa)$ there exists a pair of sets $A$ and $B$ in the plane and a starting point $x_0$ such that the set of fixed points of the corresponding operator $T_{\lambda,\mu}^{\kappa}$ is nonempty, while the projection method based on $(\lambda,\mu,\kappa)$ and the pair $A,B$, applied to $x_0$, does not converge finitely. 
Examples~\ref{Ex1},~\ref{Ex2}, and~\ref{Ex3} cover all parameter combinations except for the curve $(1/t,1/t,t)$, $t\in [1/2,+\infty)$, see Figure~\ref{fig:examples1}. 
\begin{figure}[ht]
    \centering
    \includegraphics[width=0.3\textwidth]{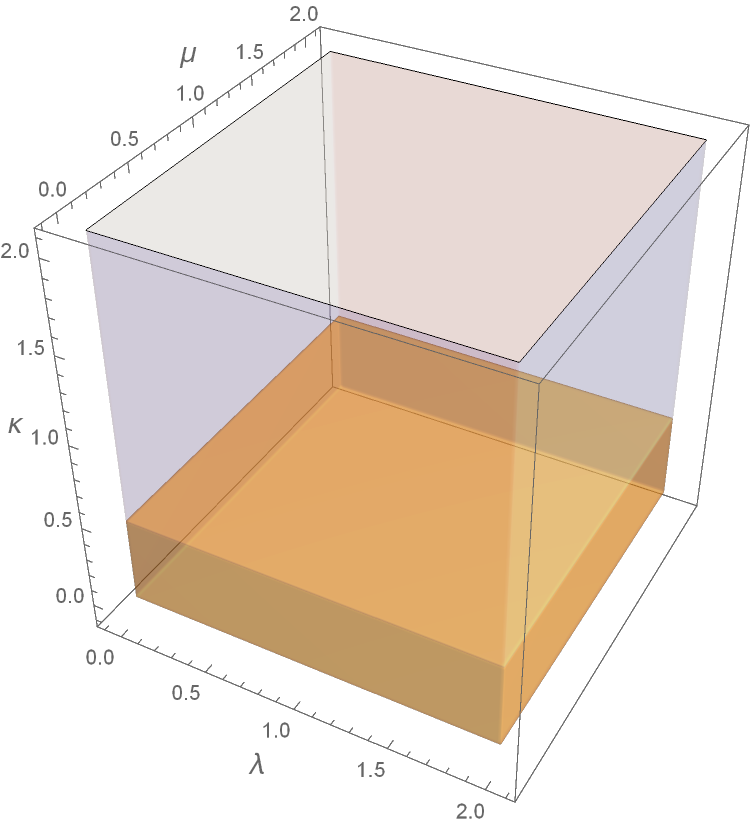}
    \includegraphics[width=0.3\textwidth]{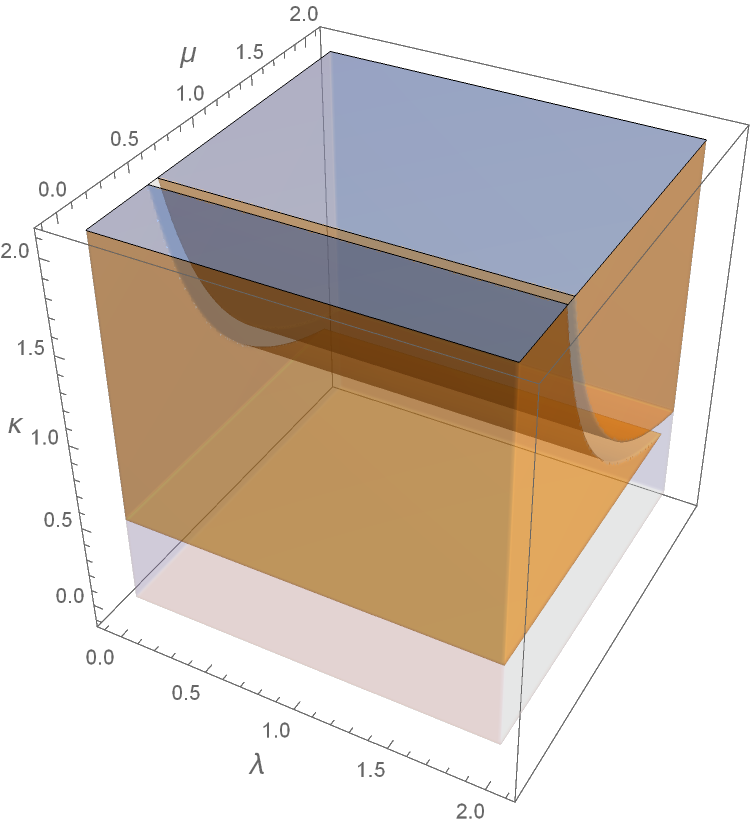}
    \includegraphics[width=0.3\textwidth]{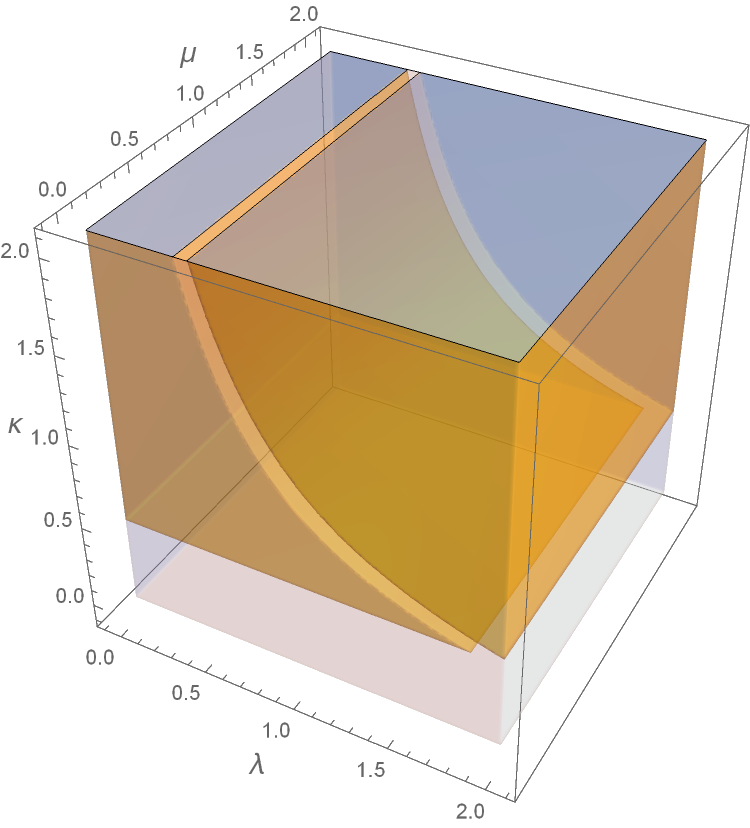}
    \caption{From left to right: the values of parameters $(\lambda, \mu,\kappa)$ covered by Examples~\ref{Ex1}, \ref{Ex2}, and \ref{Ex3}.}
    \label{fig:examples1}
\end{figure}
Examples~\ref{Ex4},~\ref{Ex5}, and~\ref{Ex6} address the values $t\in (1/2,+\infty)$, and the only remaining choice $(2,2,1/2)$ corresponds to the Douglas--Rachford method, see Figure~\ref{fig:examples2}.
\begin{figure}[ht]
    \centering
    \begin{overpic}[
    width=0.55\textwidth]{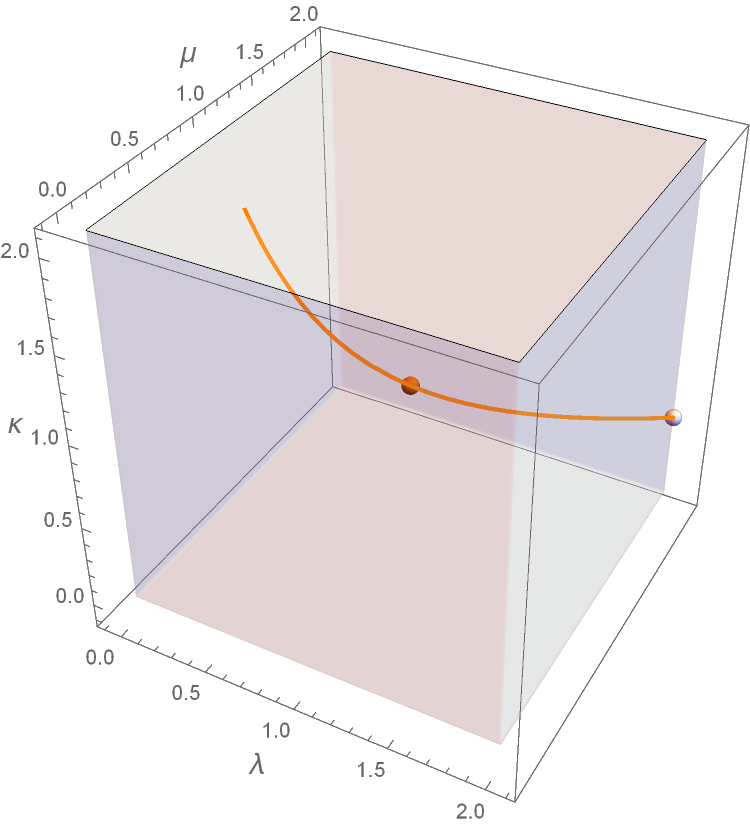}
\put(35,72){{Example~\ref{Ex5}}}
\put(25,51){{Example~\ref{Ex6}}}
\put(55,44){Example~\ref{Ex4}}
\put(69,55){Douglas--Rachford}
\end{overpic}
    \caption{The values of parameters $(\lambda, \mu,\kappa)$ covered by Examples~\ref{Ex4}, \ref{Ex5}, and \ref{Ex6}, and the single choice corresponding to the Douglas--Rachford method.}
    \label{fig:examples2}
\end{figure}

For convenience of describing the concrete subsets of the plane, we mostly use polar coordinates.

\begin{figure}[!ht]
    \centering
    \includegraphics[width = 0.5\textwidth]{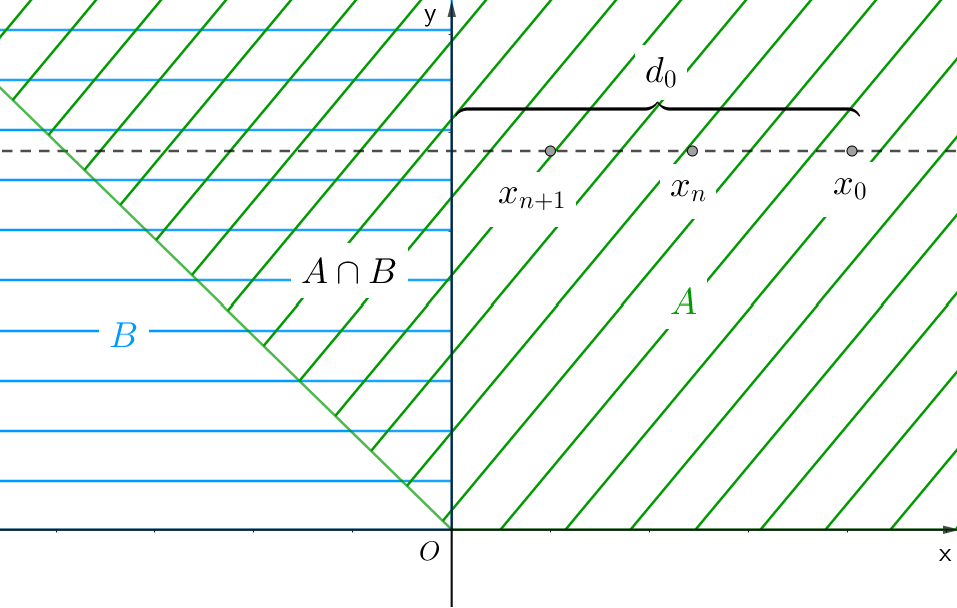}
    \caption{Illustration of Example~\ref{Ex1}, when $(\lambda,\mu,\kappa) \in (0,2] \times (0,2] \times (0,1/2)$.}
    \label{fig:Ex1}
\end{figure}

\begin{example}\label{Ex1}
Let $A = \{(\rho , \theta) \with 0 \leq \theta \leq 3\pi/4\}$, $B = \{(\rho , \theta)\with \pi/2 \leq \theta \leq \pi\}$, and set \(x_0 \in \Int A\). Then for any $n \in \mathbb{N}$, $T^n(x_0) \notin \Fix T$ with $T =T_{\lambda,\mu}^{\kappa}$ and
\[(\lambda,\mu,\kappa) \in (0,2] \times (0,2] \times (0,1/2).\]
\end{example}
\begin{proof}
We know $A\cap B = \{(\rho ,\theta)\with \pi/2 \leq \theta \leq 3\pi/4\}$. Let $L\coloneqq\{(\rho , \theta)\with \theta = \pi/2\}$. Suppose the Cartesian coordinates of $x_0$ are $x_0 = (d_0,y)$, where $d_0>0,y>0$, thus $d(x_0,L)=d_0$, as illustrated in Figure~\ref{fig:Ex1}.
Since $P_A^{\lambda}(x_0) = x_0$, we have
\begin{align}\label{eq:Ex1_1}
P_B^{\mu}(P_A^{\lambda}(x_0))=P_B^{\mu}(x_0)=(1-\mu) \cdot (d_0,y)+\mu\cdot (0,y) = ((1-\mu) d_0,y),
\end{align}
and therefore
\begin{align}\label{eq:Ex1_2}
T(x_0)=(1-\kappa)x_0 + \kappa P_B^{\mu}( P_A^{\lambda}(x_0)) = (1-\kappa) (d_0,y)+ \kappa ((1-\mu) d_0,y) = ((1-\kappa \mu) d_0,y).
\end{align}
Similarly, we can claim 
\[T^n(x_0) = ((1-\kappa \mu)^n d_0,y).\]
Given $(\mu,\kappa) \in (0,2] \times (0,1/2)$, we have that, for any $n \in \N$, $0<(1-\kappa \mu)^n<1$, hence $T^n(x_0) \in \Int A$ and $T^n(x_0) \to P_L(x_0)$ as $n \to +\infty$.
\end{proof}


\begin{figure}[ht]
    \centering
    \includegraphics[width=0.462\textwidth]{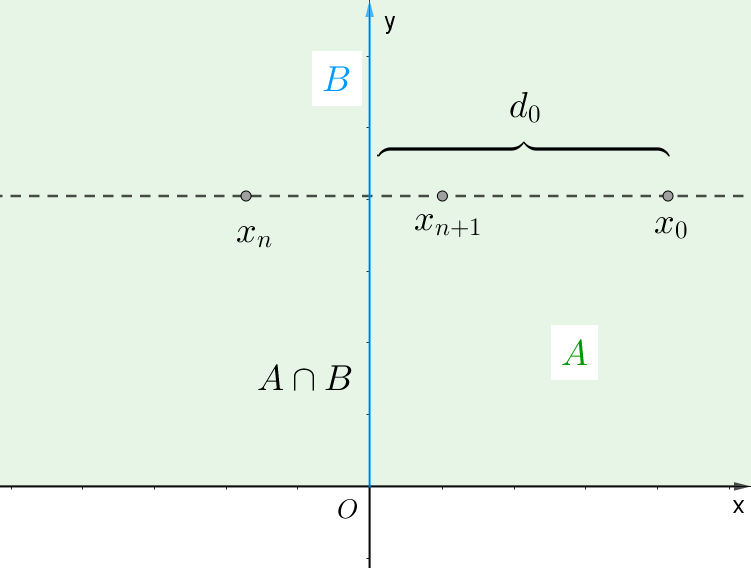}
    \hspace{0.08\textwidth}
    \includegraphics[width=0.4\textwidth]{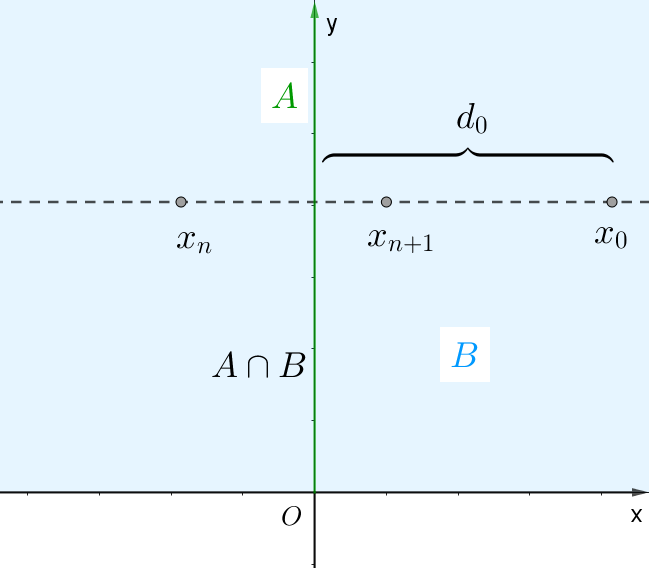}
    \caption{Left: Illustration of Example~\ref{Ex2}, when $(\lambda,\mu,\kappa) \in (0,2] \times (0,2] \times [1/2,+\infty) \; \text{ and } \; \kappa \mu \neq 1$. \\
    Right: Illustration of Example~\ref{Ex3}, when
    $(\lambda,\mu,\kappa) \in (0,2] \times (0,2] \times [1/2,+\infty) \; \text{ and } \; \kappa \lambda \neq 1$.}
    \label{fig:Ex2&3}
\end{figure}

\begin{example}\label{Ex2}
Consider $A = \{(\rho , \theta)\with 0 \leq \theta \leq \pi\}$, $B = \{(\rho , \theta)\with \theta = \pi/2\}$, and \(x_0 \in \Int A\). Then for any $n \in \mathbb{N}$, $T^n(x_0) \notin \Fix T$ with $T =T_{\lambda,\mu}^{\kappa}$ and
\[(\lambda,\mu,\kappa) \in (0,2] \times (0,2] \times [1/2,+\infty) \quad \text{ and } \quad \kappa \mu \neq 1.\]
\end{example}
\begin{proof}
Obviously, $A\cap B = B$. Let $x_0 = (d_0,y)$ be the Cartesian coordinates of $x_0$, with $d_0\neq 0,y>0$. Then $d(x_0,B)=|d_0|$, see Figure~\ref{fig:Ex2&3}.
Since $P_A^{\lambda}(x_0) = x_0$, the exact same calculations~\eqref{eq:Ex1_1} and~\eqref{eq:Ex1_2} hold true and 
 again analogously we have for any $n \in \N$ that $T^n(x_0) = ((1-\kappa \mu)^n d_0,y)$.
For our choice of $(\mu,\kappa)$, it holds that for any $n \in \N$, $(1-\kappa \mu)^n \neq 0$, leading to $T^n(x_0) \notin \Fix T$. 
\end{proof}


\begin{example}\label{Ex3}
Let $A = \{(\rho , \theta)\with \theta = \pi/2\}$, $B = \{(\rho , \theta)\with 0 \leq \theta \leq \pi\}$, and \(x_0 \in \Int B\). Then for any $n \in \mathbb{N}$, $T^n(x_0) \notin \Fix T$ with $T =T_{\lambda,\mu}^{\kappa}$ and
\[(\lambda,\mu,\kappa) \in (0,2] \times (0,2] \times [1/2,+\infty) \quad \text{ and } \quad \kappa \lambda \neq 1.\]
\end{example}
\begin{proof}
    We know $A\cap B = A$. For $x_0 = (d_0,y)$, where $d_0\neq 0,y>0$, we have $d(x_0,A)=|d_0|$, as depicted in Figure~\ref{fig:Ex2&3}. Here,
    \[P_A^{\lambda}(x_0)=(1-\lambda)  (d_0,y)+\lambda (0,y) = ((1-\lambda) d_0,y) \; \in\; B.\]
    Hence $P_B^{\mu}(P_A^{\lambda}(x_0))=P_A^{\lambda}(x_0)$ and
    \[T(x_0)=(1-\kappa)x_0 + \kappa P_B^{\mu}( P_A^{\lambda}(x_0)) = (1-\kappa)  (d_0,y)+ \kappa  ((1-\lambda) d_0,y) = ((1-\kappa \lambda) d_0,y).\]
    Similarly, for any $n \in \N$, $T^n(x_0) = ((1-\kappa \mu)^n d_0,y).$
    For the specific choice of $(\lambda,\kappa)$, we have that, for any $n \in \N$, $(1-\lambda \mu)^n \neq 0$, thus $T^n(x_0) \notin \Fix T$. 
\end{proof}

\begin{figure}[ht]
    \centering
    \includegraphics[width=0.47\textwidth]{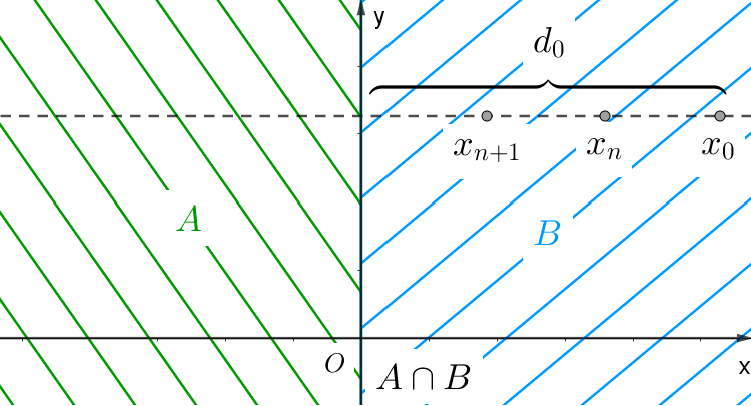}
    \hspace{0.05\textwidth}
    \includegraphics[width=0.43\textwidth]{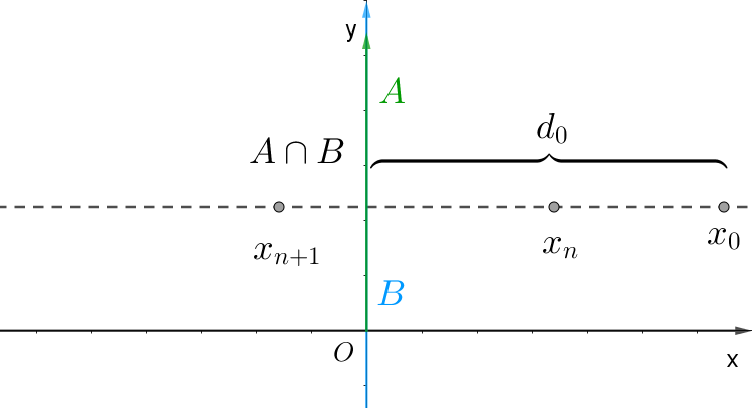}
    \caption{Left: Illustration of Example~\ref{Ex4}, when $(\lambda,\mu,\kappa) = (1/t,1/t,t), \; \text{where} \; 1/2 <t<1$. \\
    Right: Illustration of Example~\ref{Ex5}, when
    $(\lambda,\mu,\kappa) = (1/t,1/t,t), \; \text{where} \; t>1$.}
    \label{fig:Ex4&5}
\end{figure}


\begin{example}\label{Ex4}
Choose $A = \{(\rho , \theta)\with \pi/2 \leq \theta \leq 3\pi/2\}$, $B = \{(\rho , \theta)\with -\pi/2 \leq \theta \leq \pi/2\}$, and set \(x_0 \in \Int B\). Then for any $n \in \mathbb{N}$, $T^n(x_0) \notin \Fix T$ with $T =T_{\lambda,\mu}^{\kappa}$ and
\[(\lambda,\mu,\kappa) = (1/t,1/t,t), \quad \text{ where } \quad 1/2 <t<1.\]
\end{example}
\begin{proof}
Here, $A\cap B = \{(\rho , \theta)\with \theta = \pi/2 \; {\rm or}\; \theta = -\pi/2\}$. Let $L \coloneqq A \cap B$, and $x_0 = (d_0,y)$, where $d_0>0,y>0$, thus $d(x_0,L)=d_0$, see 
 Figure~\ref{fig:Ex4&5}.  We have
    \begin{align}\label{eq:Ex4_1}
    P_A^{\lambda}(x_0) = (1-1/t) \cdot (d_0,y)+1/t\cdot (0,y) = ((1-1/t)d_0,y).    
    \end{align} 
Then     
\begin{align}\label{eq:Ex4_2}
P_B^{\mu}(P_A^{\lambda}(x_0))=(1-1/t) \cdot ((1-1/t)d_0,y)+1/t\cdot (0,y) = ((1-1/t)^2 d_0,y),
    \end{align}  
and therefore
\begin{align}\label{eq:Ex4_3}
T(x_0) & = (1-\kappa)x_0 + \kappa P_B^{\mu}( P_A^{\lambda}(x_0))\notag \\
& = (1-t)  (d_0,y)+ t  (\left(1-1/t\right)^2 d_0,y) \\
& = \left(\left(1/t-1\right) d_0,y\right).\notag
\end{align}
Note, $ T(x_0)\in  \Int B$. Similarly, we obtain that, for any $n \in \N$, 
\[T^n(x_0) = ((1/t-1)^n d_0,y).\]
Since $1/2 <t<1$, we have for any $n \in \N$ that $0<(1/t-1)^n<1$. Thus, for any $n \in \N$, $T^n(x_0) \in \Int B$ and $T^n(x_0) \to P_L(x_0)$ as $n \to +\infty$. 
\end{proof}


\begin{example}\label{Ex5}
Let $A = \{(\rho , \theta)\with \theta = \pi/2\}$ and $B = \{(\rho , \theta)\with \theta = \pi/2 \; {\rm or}\; \theta = -\pi/2\}$. Suppose the Cartesian coordinates of $x_0$ are $x_0 = (d_0,y)$, where $d_0>0,y>0$. Then for any $n \in \mathbb{N}$, $T^n(x_0) \notin \Fix T$ with $T =T_{\lambda,\mu}^{\kappa}$ and
\[(\lambda,\mu,\kappa) = (1/t,1/t,t), \quad \text{ where } \quad t>1.\]
\end{example}
\begin{proof}
Obviously, $A\cap B = A$. Let $L\coloneqq A$, then $d(x_0,L)=d_0$, as illustrated in Figure~\ref{fig:Ex4&5}.
The same calculations \eqref{eq:Ex4_1}--\eqref{eq:Ex4_3} can be performed and again, we may claim for any $n \in \N$ that $T^n(x_0) = ((1/t-1)^n d_0,y)$.
For $t>1$, it holds that, for any $n \in \N$, $0<|1/t-1|^n<1$. Therefore, for any $n \in \N$, $T^n(x_0) \notin \Fix T$ and $T^n(x_0) \to P_L(x_0)$ as $n \to +\infty$. 
\end{proof}

\begin{figure}[!ht]
    \centering
    \includegraphics[width = 0.45\textwidth]{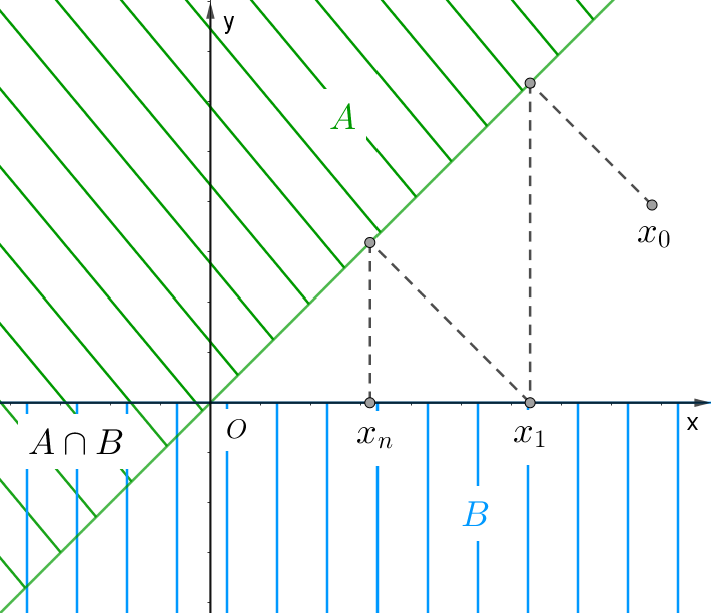}
    \caption{Illustration of Example~\ref{Ex6}, when $(\lambda,\mu,\kappa) = (1,1,1)$.}
    \label{fig:Ex6}
\end{figure}

\begin{example}\label{Ex6}
Let $A = \{(\rho , \theta)\with \pi/4 \leq \theta \leq 5 \pi/4\}$,  $B = \{(\rho , \theta)\with \pi \leq \theta \leq 2\pi \}$, and $x_ 0 = (x,y)$ be the Cartesian coordinates of $x_0 \in \R^2 \setminus (A\cup B)$. Then for any $n \in \mathbb{N}$, $T^n(x_0) \notin \Fix T$ with $T =T_{\lambda,\mu}^{\kappa}$ and
\[(\lambda,\mu,\kappa) = (1,1,1).\]
\end{example}
\begin{proof}
In this case, $T$ is the alternating projection operator, i.e., $T = P_B \circ P_A$. We see that
\[P_A(x_0)=\left( \frac{x+y}{2},\frac{x+y}{2} \right) \quad \mbox{and} \quad P_B(P_A(x_0))=\left( \frac{x+y}{2},0 \right).\]
Hence, for any $n \in \N$, $T^n(x_0) = \left((x+y)/2^n,0 \right) \notin \Fix T$ and $T^n(x_0) \to 0 \in \Fix T$ as $n \to +\infty$. 
\end{proof}

We conclude  Section~\ref{sec:counterexamples} with a formal proof of Theorem~\ref{thm:badmethods}. 

\begin{proof}[Proof of Theorem~\ref{thm:badmethods}] As described earlier in this section, the proof follows from the explicit constructions in Examples~\ref{Ex1}--\ref{Ex6}. Notice that union of the sets of the parameters values $(\lambda,\mu,\kappa)$ used in these examples together with the Douglas--Rachford selection $(2,2,1/2)$ comprises all possibilities given in the Definition~\ref{def:projop}. Since each demonstrates the existence of a pair of sets $A, B\subseteq\R^2$ and $x_0\in \R^2$ for which the method defined by the specific choice of the parameters does not converge finitely (while the set of fixed points of the corresponding operator is nonempty),  these examples together show the desired result. 
\end{proof}

\section{Conclusion and Outlook}

We have considered a reasonably large family of projection methods in a toy setting of two convex cones in the plane. We have shown that within this simple model, the Douglas--Rachford method outperforms all other methods from this class in terms of finite convergence. 

Even though this result appears new, and can be extended to the case of two convex polytopes in the plane, it is unclear if similar superiority in terms of finite convergence is typical for this method in other settings, in particular, for higher-dimensional and non-polyhedral problems. We note here that in the absence of Slater's condition, the Douglas--Rachford method may fail to have finite convergence in high dimensions even for polyhedral sets; see \cite[Example~8.8]{BD17}.

It is an exciting direction for future work to obtain a geometric characterization for the finite convergence of the Douglas--Rachford method for the nonpolyhedral cases, and for higher dimensions.

\subsection*{Acknowledgements} 

Part of this work was done during MND's visit to the University of New South Wales, Sydney. We thank UNSW Sydney for its hospitality. We are also grateful to the Australian Research Council for continuing support. Specifically, this research was partially supported by Discovery Projects DP200100124 and DP230101749.

\bibliography{refs}
\bibliographystyle{plain}

\end{document}